\def\sqr#1#2{{\vcenter{\hrule height.#2pt
        \hbox{\vrule width.#2pt height#1pt \kern#1pt
                \vrule width.#2pt}
        \hrule height.#2pt}}}
\def\sqr#1#2{{\vcenter{\hrule height.#2pt
        \hbox{\vrule width.#2pt height#1pt \kern#1pt
                \vrule width.#2pt}
        \hrule height.#2pt}}}
\def\square{\mathchoice\sqr64\sqr64\sqr{4}3\sqr{3}3}
\def\QED{\hfill$\square$}
\newtheorem{Theorem}{Theorem}[section]
\newtheorem{Lemma}[Theorem]{Lemma}
\newtheorem{Proposition}[Theorem]{Proposition}
\newtheorem{Assumptions and Discussion}[Theorem]{Assumptions and Discussion}
\newtheorem{Remark}[Theorem]{Remark}
\newtheorem{Definition}[Theorem]{Definition}
\theoremstyle{definition}
\newtheorem*{acknowledgement}{Acknowledgement}
\numberwithin{equation}{section}
\newcommand{\ara}{\mathop{\mathrm{ara}}\nolimits}
\newcommand{\biara}{\mathop{\mathrm{biara}}\nolimits}
\newcommand{\triara}{\mathop{\mathrm{triara}}\nolimits}
\newcommand{\pd}{\mathop{\mathrm{pd}}\nolimits}
\newcommand{\height}{\mathop{\mathrm{height}}\nolimits}
\renewcommand{\H}{\mathcal{H}}
\newcommand{\s}{\mathcal{S}}
\begin{document}

\baselineskip=16pt

\title[Arithmetical Rank of Strings and Cycles]
{\Large\bf Arithmetical Rank of Strings and Cycles}

\thanks{AMS 2010 {\em Mathematics Subject Classification}:
Primary 13F55; Secondary 13A15.}
\thanks{Keywords: arithmetical rank; projective dimension; squarefree monomial ideals; hypergraphs; free resolutions.}
\thanks{$^1$ K. Kimura was partially supported by JSPS Grant-in-Aid for 
  Young Scientists (B) 24740008. }
\thanks{$^2$ P. Mantero was partially supported by an AMS-Simons Travel Grant.}

\author[Kyouko Kimura]{Kyouko Kimura$^1$}
\address{Department of Mathematics, Graduate School of Science, Shizuoka University,
836 Ohya, Suruga-ku, Shizuoka 422-8529, Japan}
\email{skkimur@ipc.shizuoka.ac.jp}

\author[Paolo Mantero]{Paolo Mantero$^2$}
\address{University of California, Riverside \\ Department of Mathematics \\
 Riverside, CA 92521}
\email{mantero@math.ucr.edu\newline
\indent{\it URL:} \href{http://math.ucr.edu/~mantero/}{\tt http://math.ucr.edu/$\sim$mantero/}}

\date{\today}

\vspace{-0.1in}

%

\maketitle

\vspace{-0.2in}

\begin{abstract}
  Let $R$ be a polynomial ring over a field $K$. 
  To a given squarefree monomial ideal $I \subset R$, 
  one can associate a hypergraph $\H(I)$. 
  In this article, we prove that the arithmetical rank of $I$ 
  is equal to the projective dimension of $R/I$ when 
  $\H (I)$ is a string or a cycle hypergraph. 
\end{abstract}

\section*{Introduction}
Let $R=K[x_1, \ldots, x_n]$ be a polynomial ring over a field $K$ 
and $I$ a squarefree monomial ideal of $R$. 
The \textit{arithmetical rank} of $I$, denoted by $\ara I$, is defined 
as the minimum number $u$ of elements $q_1, \ldots, q_u \in R$ such that the equality
\begin{displaymath}
  \sqrt{(q_1, \ldots, q_u)} = \sqrt{I} \;(= I)
\end{displaymath}
holds. When this is the case, one says that $q_1, \ldots, q_u$ 
generate $I$ up to radical. 
Let $G (I)$ denote the minimal set of monomial generators of $I$ and set $\mu (I) = \# G(I)$. 
Then $\ara I \leq \mu (I)$ holds. On the other hand, Lyubeznik \cite{Ly} 
proved that $\ara I \geq \pd R/I$, where $\pd R/I$ denotes 
the projective dimension of $R/I$. Therefore we have 
\begin{displaymath}
  \height I \leq \pd R/I \leq \ara I \leq \mu (I). 
\end{displaymath}
From the above inequalities, 
it is natural to ask when $\ara I = \pd R/I$ holds. 
Many authors including \cite{Barile, BKMY, BTcone, BTpolygon, EOT, Kimura, KRT, 
KTforest, KTh3Gor, KTY, KTY2, Kummini, Macchia, Morales, SV, Varbaro} 
investigated this problem. 
In particular, in \cite{KTY, KTY2} (see also \cite{KRT}), 
Terai, Yoshida and the first author 
attacked the problem for ideals $I$ with $\mu (I) - \height I \leq 2$. 
Their idea is to classify these squarefree monomial ideals using hypergraphs 
(this classification is also used in \cite{KTY3}). The association of a hypergraph to a squarefree monomial ideal $I$ of $R$
with $G(I) = \{ m_1, \ldots, m_{\mu} \}$ is defined by setting 
\begin{displaymath}
  \H (I) := \big\{ \{ j \in [\mu] \; : \; x_i \mid m_j \} 
               \; : \; i=1, \ldots, n \big\}.
\end{displaymath}
$\H(I)$ is indeed a (separated) hypergraph on the vertex set 
$[\mu] := \{ 1, 2, \ldots, \mu \}$. 
On the other hand, given a separated hypergraph $\H$, 
one can construct a squarefree monomial ideal $I$ with $\H (I) = \H$; 
see Section \ref{sec:hypergraph} for more details. 

\par
We focus on the squarefree monomial ideals $I$ 
such that $\H (I)$ is a string or a cycle. 
For these ideals, Lin and  the second author \cite{LM} found an explicit formula 
expressing the projective dimension of $R/I$ in terms of purely combinatorial invariants of the hypergraph $\H (I)$, namely
$$\pd(R/I)= \mu(I) - b(\H(I))+M(\H(I)).$$
See the discussion before Theorem \ref{LMpd} for the definition of $b(\H(I))$ and $M(\H(I))$.

In the present work we study the arithmetical rank of these ideals $I$. We prove that $\pd R/I$ elements can be chosen so that they generate $I$ up to radical, and have ``small'' monomial support. To be more precise, let us recall that the \textit{binomial arithmetical rank} of $I$, denoted by $\biara I$, is the minimum number $u$ of binomials or monomials 
$q_1, \ldots, q_u \in R$ which generate $I$ up to radical. 
Here we also define the \textit{trinomial arithmetical rank} of $I$ 
as the minimum number $u$ of trinomials, binomials or monomials 
$q_1, \ldots, q_u \in R$ which generate $I$ up to radical. 
We denote it by $\triara I$. 
Clearly one has $\ara I \leq \triara I \leq \biara I$. 
Our main result is the following theorem. 
\begin{Theorem}
  \label{MainThm}
  Let $I$ be a squarefree monomial ideal of $R$. 
  \begin{enumerate}
  \item Assume that $\H (I)$ is a string hypergraph. 
    Then $\ara I = \biara I = \pd R/I$. 
  \item Assume that $\H (I)$ is a cycle hypergraph. 
    Then $\ara I = \triara I = \pd R/I$. 
  \end{enumerate}
\end{Theorem}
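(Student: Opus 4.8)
The plan is to leverage the sandwich
\[
\pd R/I \;\le\; \ara I \;\le\; \triara I \;\le\; \biara I,
\]
in which the leftmost inequality is Lyubeznik's bound and the remaining two are immediate from the definitions. For part (1) it therefore suffices to exhibit $\pd R/I$ binomials or monomials generating $I$ up to radical, i.e. to prove $\biara I \le \pd R/I$; this collapses the whole chain to equalities. For part (2) it suffices to exhibit $\pd R/I$ trinomials, binomials or monomials, i.e. to prove $\triara I \le \pd R/I$, which forces $\ara I = \triara I = \pd R/I$ (note that for cycles $\biara I$ need not drop this far, which is exactly why the statement invokes $\triara$ rather than $\biara$). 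Since Theorem \ref{LMpd} supplies the explicit value $\pd R/I = \mu(I) - b(\H(I)) + M(\H(I))$, the target count is known in advance, and both parts reduce to a purely combinatorial packaging of the $\mu := \mu(I)$ minimal generators $m_1, \dots, m_\mu$.

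The mechanism converting such a packaging into a radical system of generators is the Schmitt--Vogel lemma. Concretely, if one writes $\{m_1, \dots, m_\mu\} = P_0 \sqcup P_1 \sqcup \cdots \sqcup P_t$ with $\#P_0 = 1$ and with the property that for every $l \ge 1$ and every pair of distinct monomials $p_1, p_2 \in P_l$ there are an index $l' < l$ and a monomial $p_0 \in P_{l'}$ dividing the product $p_1 p_2$, then for suitable exponents the $t+1$ elements $q_l := \sum_{p \in P_l} p^{\,e(p)}$ satisfy $\sqrt{(q_0, \dots, q_t)} = I$. Each $q_l$ has exactly $\#P_l$ monomial terms, so packets of size at most $2$ produce binomials and packets of size at most $3$ produce trinomials. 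Thus part (1) amounts to constructing a Schmitt--Vogel packaging of a string hypergraph into $\mu - b + M$ packets, each of cardinality at most $2$, and part (2) to constructing one for a cycle hypergraph into $\mu - b + M$ packets, each of cardinality at most $3$ and with triples used only where the cyclic structure forces them.

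I would build the packaging directly from the overlap pattern of the edges of $\H(I)$. Consecutive edges of a string share a vertex, and the resulting divisibility relations among the $m_j$ let one arrange, for suitably chosen pairs, that the product of two generators placed together is divisible by a generator placed earlier --- exactly the hypothesis of Schmitt--Vogel. The natural scheme is to put certain generators into early packets to serve as the required divisors and then to pair the remaining generators across them while traversing the string from one end to the other. The invariants $b$ and $M$ are tailored to count how many generators can be absorbed into an already-opened packet versus how many must open a fresh one, so careful bookkeeping along the traversal should produce exactly $\mu - b + M$ packets. For the cycle I would run the same procedure after cutting at a chosen base edge; the edge closing the loop has no predecessor in the induced order, and it is here that a single packet of three monomials is introduced, the extra term recording the overlap a binomial cannot encode, while the total stays at $\mu - b + M$.

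The main obstacle will be the cycle case, where the global cyclic constraint clashes with the strictly increasing order of indices demanded by Schmitt--Vogel. Because a cycle has no intrinsic first or last edge, the traversal must begin at a chosen base point, and one must verify both that the forced triple genuinely satisfies the divisibility condition against the already-placed generators and that introducing it does not push the packet count past $\mu - b + M$. A companion difficulty, already present for strings, is the optimality of the count: one must show that the greedy pairing realizes, and does not merely bound, the value $\mu - b + M$, equivalently that $b - M$ equals the maximum number of generators the pairing can eliminate by absorption. I expect this matching of the combinatorial construction to the Lin--Mantero formula, rather than the local Schmitt--Vogel verification, to carry most of the technical weight.
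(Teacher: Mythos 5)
Your overall frame --- Lyubeznik's inequality $\pd R/I \le \ara I \le \triara I \le \biara I$ on one side, explicit elements certified by the Schmitt--Vogel lemma on the other, with the Lin--Mantero formula $\pd(\H) = \mu - b(\H) + M(\H)$ supplying the target count --- is exactly the paper's strategy. But everything after that frame is deferred, and what is deferred is the entire content of the theorem. You never construct the packaging, and the two places where you say the work lies (``careful bookkeeping along the traversal should produce exactly $\mu - b + M$ packets''; ``I expect this matching \ldots to carry most of the technical weight'') are precisely the places where a proof is required rather than an expectation. Concretely, your heuristic that $b$ and $M$ ``count how many generators can be absorbed into an already-opened packet versus how many must open a fresh one'' does not engage with the sign of $M$: the modularity enters the formula with a \emph{plus}, so near each $2$-special configuration the construction must be strictly \emph{less} efficient than the naive pairing, and a greedy traversal that simply absorbs whenever it can will undershoot $\pd$ and hence fail to generate $I$ up to radical (Schmitt--Vogel gives an upper bound on $\ara$, so producing too few packets means the hypothesis (SV3) cannot actually be satisfied somewhere). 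The paper avoids a single global packaging altogether: for strings it inducts on $\mu$, splitting off either one generator (vertex $2$ closed, Lemma \ref{+1}) or the first three generators as a separate ideal of binomial arithmetical rank $2$ (vertex $2$ open, Lemma \ref{+2}), and concatenating independent radical generating sets; the matching with $\mu - b + M$ is then inherited from the inductive formulas for $\pd$ (Lemmas \ref{LMpd-inductive} and \ref{LMpd-string-inductive-3}) rather than verified by bookkeeping.

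The cycle case is where your sketch is most clearly off. You predict that ``a single packet of three monomials is introduced'' at the base edge where the loop closes. That is not how the count works out: in the alternating closed--open cycle with $\mu = 4m+2$, the paper needs $m$ trinomials, one for each of the $m$ disjoint $2$-special configurations (each contributing $+1$ to $M$), not one trinomial total; and reaching that alternating case already requires a chain of nontrivial reductions (at most one closed vertex, Lemma \ref{cycle1closed}; two adjacent closed vertices, Lemma \ref{cycle2ad-closed}; strings of opens of length $\equiv 0 \bmod 3$, Lemma \ref{cycle3ad-open-0mod3}; shortening long strings of opens by closing selected vertices, Lemma \ref{cycle3ad-open}; contracting closed--open--open--closed segments, Lemma \ref{cycle-cooc}), each of which needs its own verification that $\pd$ changes by exactly the number of new polynomials introduced. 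None of these reductions, nor any substitute for them, appears in your proposal. So while your plan points in the right direction, it is a statement of the problem in Schmitt--Vogel language rather than a proof, and the specific global-traversal construction you gesture at would fail as described because it does not account for the $+M(\H)$ term.
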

In particular, the arithmetical rank of these ideals is independent of the 
characteristic of the field $K$. 
Crucial ingredients of our proof of Theorem \ref{MainThm} 
are a lemma by Schmitt and Vogel (\cite{SV}, Lemma \ref{SVLemma}) 
and the above formula for the projective dimension (Theorem \ref{LMpd}).

\par
Now we explain the organization of this article. 
In Section \ref{sec:hypergraph}, we recall the definition of the (separated) hypergraph 
associated to a squarefree monomial ideal, first introduced in \cite{KTY}. 
In Section \ref{sec:pd}, we recall a few results by Lin and the second author \cite{LM} that will be employed in the subsequent sections.
Then, in Sections \ref{sec:string} and  \ref{sec:cycle}, 
we prove Theorem \ref{MainThm} (1) and (2), respectively. 

\section{Hypergraphs}
\label{sec:hypergraph}
In this section, we recall the construction of a separated hypergraph associated to any 
squarefree monomial ideal. The construction was introduced in 
\cite{KTY}; see also \cite{KRT, KTY2, KTY3, LM}. 
\smallskip

\par
Set $V=[\mu]$. 
A collection $\H \subset 2^V$ is called a \textit{hypergraph} 
on the vertex set $V$ if $V = \bigcup_{F \in \H} F$. 
An element $F \in \H$ is called a face of $\H$. 
A vertex $j \in V$ is called closed (resp.\  open) if $\{ j \} \in \H$ 
(resp.\  $\{ j \} \notin \H$). 
A hypergraph is called saturated if $\{ j \} \in \H$ for all $j \in V$. 
Let $i,j \in V$ be two vertices of $\H$. We say that $i$ is a neighbor of $j$ 
if there exists a face $F \in \H$ containing both $i$ and $j$. 

\par
A hypergraph $\H$ on $V$ is said to be separated if 
for all vertices $i,j \in V$ ($i \neq j$), there exist faces $F, G \in \H$ 
such that $i \in F \setminus G$ and $j \in G \setminus F$. 
Let $I$ be a squarefree monomial ideal of $R = K[x_1, \ldots, x_n]$ 
with $G(I) = \{ m_1, \ldots, m_{\mu} \}$. 
The hypergraph associated to $I$ is defined as  
\begin{displaymath}
  \H (I) := \big\{ \{ j \in [\mu] \; : \; x_i \mid m_j \} 
               \; : \; i=1, \ldots, n \big\}, 
\end{displaymath}
which is a separated hypergraph on $[\mu]$. 

\par
Conversely, let $\H$ be a separated hypergraph on $[\mu]$. 
Then we can construct a squarefree monomial ideal $I$ with $\H (I) = \H$ 
in a polynomial ring with enough variables as follows: 
for each $F \in \H$, take a squarefree monomial $m_F$ such that 
$m_F$ and $m_G$ are coprime if $F \neq G$. 
For each $j \in [\mu]$, set $m_j = \prod_{F \in \H, \, j \in F} m_F$. 
Then $I=(m_1, \ldots, m_{\mu})$ is a squarefree monomial ideal with 
$\H (I) = \H$. 
This construction implies that there are many ideals $I$ 
(in various polynomial rings) with $\H (I) = \H$. 
We set $I (\H)$ to be the ideal obtained from the above construction 
by setting each $m_F$ to be a variable $x_F$ in a polynomial ring 
$R(\H) := K[x_F : F \in \H]$. 

\par
The above correspondence between squarefree monomial ideals 
and separated hypergraphs yields the classification 
of squarefree monomial ideals mentioned in the introduction. The following proposition shows 
the usefulness of this association for our purpose. 
\begin{Proposition}[{\cite[Corollary 2.4]{LM}, \cite[Proposition 3.2]{KRT}}]
  \label{class-hypergraph}
  Let $I_1, I_2$ be squarefree monomial ideals with $\H (I_1) = \H (I_2)$. 
  Then $\pd I_1 = \pd I_2$ and $\ara I_1 = \ara I_2$ hold. 
\end{Proposition}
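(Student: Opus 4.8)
The plan is to prove the two equalities separately, exploiting that the common hypergraph $\H := \H(I_1) = \H(I_2)$ records exactly the pattern of which generator is divided by which variable. Concretely, writing $R_t$ for the ambient ring of $I_t$, both ideals have generating sets indexed by $[\mu]$, say $\{m_j^{(t)}\}_{j \in [\mu]}$, with $m_j^{(t)} = \prod_{F \in \H,\, j \in F} m_F^{(t)}$, where the $m_F^{(t)}$ are pairwise coprime squarefree monomials. After discarding any variable dividing no generator (which changes neither $\ara$ nor $\pd$, by flat base change), I may assume each face $F \in \H$ is supported by at least one variable, so every $m_F^{(t)} \neq 1$ and the variables of $R_t$ are partitioned by the faces of $\H$.

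For $\ara I_1 = \ara I_2$, first I would build a $K$-algebra homomorphism $\phi \colon R_1 \to R_2$ carrying generators to generators: for each face $F$ choose one variable dividing $m_F^{(1)}$, send it to $m_F^{(2)}$, and send every other variable dividing $m_F^{(1)}$ to $1$. Then $\phi(m_F^{(1)}) = m_F^{(2)}$, hence $\phi(m_j^{(1)}) = m_j^{(2)}$ for all $j$. Now suppose $q_1, \ldots, q_u$ generate $I_1$ up to radical with $u = \ara I_1$. Since $I_1$ is radical, each $q_i \in I_1$, so $\phi(q_i) \in I_2$ and thus $\sqrt{(\phi(q_1), \ldots, \phi(q_u))} \subseteq I_2$; conversely, from $(m_j^{(1)})^N \in (q_1, \ldots, q_u)$ I get $(m_j^{(2)})^N = \phi\bigl((m_j^{(1)})^N\bigr) \in (\phi(q_1), \ldots, \phi(q_u))$, so $I_2 \subseteq \sqrt{(\phi(q_1), \ldots, \phi(q_u))}$. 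Hence $\phi(q_1), \ldots, \phi(q_u)$ generate $I_2$ up to radical and $\ara I_2 \leq \ara I_1$. Building the symmetric map $R_2 \to R_1$ the same way gives the reverse inequality, so $\ara I_1 = \ara I_2$.

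For $\pd I_1 = \pd I_2$, the plan is to show that $I_1$ and $I_2$ have isomorphic lcm-lattices and then invoke the theorem of Gasharov, Peeva and Welker that the total Betti numbers of a monomial ideal depend only on its lcm-lattice. For any $I$ with $\H(I) = \H$ and any $T \subseteq [\mu]$ one computes $\operatorname{lcm}(m_j : j \in T) = \prod_{F \in \H,\, F \cap T \neq \emptyset} m_F$; since the $m_F$ are pairwise coprime, two subsets give the same lcm if and only if they determine the same collection $\{F \in \H : F \cap T \neq \emptyset\}$, and divisibility of lcm's corresponds to inclusion of these collections. Thus the lcm-lattice of $I$ is isomorphic, as a poset, to the subposet $\{\, \{F \in \H : F \cap T \neq \emptyset\} : T \subseteq [\mu] \,\}$ of the power set of $\H$, which depends only on $\H$. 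Applying this to $I_1$ and $I_2$ yields an lcm-lattice isomorphism, whence $\pd I_1 = \pd I_2$. (Alternatively, one can reduce $I$ to the canonical ideal $I(\H)$ by repeatedly deleting a variable ``parallel'' to another, i.e.\ having the same face, and check via the maps above and a short exact sequence that each step preserves both invariants.)

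The step I expect to require the most care is verifying that the passage through ring homomorphisms preserves the \emph{number} of defining equations in both directions, i.e.\ that $\sqrt{(\phi(q_i))} = I_2$ holds exactly rather than up to a single inclusion; this is precisely what forces the use of maps in both directions together with the explicit containment computations. On the $\pd$ side the analogous delicate point is confirming that the combinatorial bijection of lcm's is genuinely a poset isomorphism, not merely a bijection of underlying sets, since that is exactly the input the Gasharov–Peeva–Welker result requires.
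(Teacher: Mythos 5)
Your proposal is correct, and it takes what is essentially the intended route: the paper does not prove Proposition \ref{class-hypergraph} itself but imports it from \cite{LM} and \cite{KRT}, where the arithmetical-rank equality is obtained exactly by your specialization homomorphisms (sending one chosen variable of each $m_F^{(1)}$ to $m_F^{(2)}$ and the rest to $1$, applied in both directions) and the projective-dimension equality by observing that the lcm-lattice depends only on $\H$ and invoking Gasharov--Peeva--Welker. The details you flag as delicate are handled correctly (in particular, $q_i \in I_1$ because squarefree monomial ideals are radical, and the pairwise coprimality and nontriviality of the $m_F$ make your bijection of lcm's a genuine poset isomorphism), so nothing further is needed.
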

Let $I$ be a squarefree monomial ideal of $R$. Set $\H = \H (I)$. 
By Proposition \ref{class-hypergraph}, 
the following notation is well-defined: 
$\pd (\H) := \pd R/I$,  $\ara (\H) := \ara (I)$. 
We call $\pd (\H)$ (resp.\  $\ara (\H)$) the projective dimension 
(resp.\  arithmetical rank) of $\H$. 
We will compute $\pd (\H)$, $\ara (\H)$ by computing
$\pd R(\H)/I(\H)$, $\ara I (\H)$, respectively.  

\begin{Remark}
  \label{bi-tri-ara}
The statement of Proposition \ref{class-hypergraph}  remains true if we replace the arithmetical rank by
  the binomial or the trinomial arithmetical rank.
  Hence, we use the similar notations $\biara (\H)$, $\triara (\H)$.
\end{Remark}

\section{Projective dimensions of a string hypergraph and a cycle hypergraph}
\label{sec:pd}
In this section, we collect results about the projective dimensions 
of a string hypergraph and a cycle hypergraph. 
These results are proved 
by Lin and the second author in \cite{LM}.

\par
We first recall the definitions of a string hypergraph and a cycle hypergraph. 
\begin{Definition}[{\cite[Definition 2.13]{LM}}]
  \label{defn:string}
  Fix $\mu \geq 2$. 
  A hypergraph $\H$ on $V = [\mu]$ is a string 
  if $\{ j, j+1 \} \in \H$ for all $j = 1, \ldots, \mu - 1$ 
  and the only other possible faces of $\H$ are of the form $\{ j \}$, for some $j \in V$. 
\end{Definition}
For a string hypergraph $\H$ on $[\mu]$, we call the vertices $1$ and $\mu$ 
the endpoints of $\H$. Note that if $\H$ is separated, 
then both endpoints are closed vertices. 
\begin{Definition}[{\cite[Definition 4.1]{LM}}]
  \label{defn:cycle}
  Fix $\mu \geq 3$. 
  A hypergraph $\H$ on $V = [\mu]$ is a $\mu$-cycle if 
  $\H$ can be written as $\H = \widetilde{\H} \cup \{ \{ \mu, 1 \} \}$ where 
  $\widetilde{\H}$ is a string hypergraph on $[\mu]$. 
\end{Definition}

To introduce the explicit formula for the projective dimension of 
a string hypergraph and a cycle hypergraph in terms of invariants of the hypergraph 
we need some more definitions. 

\par
A hypergraph $\H$ on $[\mu]$ is called a \textit{string of opens} 
if $\H$ is a string hypergraph with $\mu \geq 3$ whose only closed vertices 
are its endpoint. \smallskip

\par
First we assume that $\H$ is a string hypergraph. 
We set $s = s(\H)$ to be the number of strings of opens inside $\H$. 
We number the strings of opens in $\H$ from one endpoint to another and 
set $n_i (\H)$ to be the number of open vertices in the $i$-th string of opens. 
We say that $\H$ is a \textit{$2$-special configuration} if 
$s \geq 2$, $\H$ does not contain two adjacent closed vertices, 
$n_1 \equiv n_s \equiv 1 \mod 3$, 
and $n_i \equiv 2 \mod 3$ for $i=2, \ldots, s-1$. 
Two $2$-special configurations contained in $\H$ are said to be 
\textit{disjoint} if they do not have a common open vertex. 
The \textit{modularity} of $\H$, denoted by $M(\H)$, 
is the maximum number of pairwise disjoint $2$-special configurations 
contained in $\H$. 

\par
Next we assume that $\H$ is a cycle hypergraph.  If $\H$ contains at least two closed vertices, 
we define $s = s(\H)$ and $n_1 (\H), \ldots, n_s (\H)$ analogously to  
the case of a string hypergraph. 
If $\H$ contains at most one closed vertex, we set $s= s(\H) = 1$ and 
$n_1 (\H) = \mu (\H) - 1$.
In either case, the definition of a $2$-special configuration $\mathcal{S}$ in $\H$ 
is the same as in the case of a string hypergraph, except for allowing that the two extremal vertices of 
$\mathcal{S}$ coincide. 
The modularity $M(\H)$ is defined in the same way as in the case of 
a string hypergraph. 

\par
Let $\H$ be a string hypergraph or a cycle hypergraph. Set 
\begin{displaymath}
  b(\H) = s(\H) 
        + \sum_{i=1}^{s(\H)} \left\lfloor \frac{n_i (\H) - 1}{3} \right\rfloor. 
\end{displaymath}
\begin{Theorem}[{Lin and Mantero \cite[Theorems 3.4 and 4.3]{LM}}]
  \label{LMpd}
  Let $\H$ be a string hypergraph or a cycle hypergraph. 
  Then 
  \begin{displaymath}
    \pd (\H) = \mu (\H) - b(\H) + M(\H). 
  \end{displaymath}
\end{Theorem}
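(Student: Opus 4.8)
The plan is to reduce everything to the canonical ideal $I(\H)$ and then to bracket $\pd(\H) = \pd R(\H)/I(\H)$ between a combinatorial upper bound obtained from iterated short exact sequences and a matching lower bound obtained from Hochster's formula. By Proposition~\ref{class-hypergraph} this reduction is harmless, so I work with the explicit generators $m_j = \prod_{F \ni j} x_F$: for a string these are $m_1 = x_{\{1\}}x_{\{1,2\}}$, $m_j = x_{\{j-1,j\}} x_{\{j,j+1\}} x_{\{j\}}^{\varepsilon_j}$ on interior vertices (with $\varepsilon_j = 1$ exactly when $j$ is closed), and $m_\mu = x_{\{\mu-1,\mu\}} x_{\{\mu\}}$. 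The first thing I would record is that when every interior vertex is open, $I(\H)$ is precisely the edge ideal of a path, so its Stanley--Reisner complex $\Delta$ is the independence complex of a path; by a well-known computation of Kozlov this complex is homotopy equivalent to a sphere or to a point according to the number of vertices modulo $3$, and this is the source of the residues modulo $3$ appearing in $b(\H)$. A closed interior vertex replaces a degree-two generator by a degree-three one, decoupling the two adjacent edge variables; thus the closed vertices delimit the path into segments of lengths $n_i(\H)$, and the general string is a concatenation of such segments.

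For the upper bound I would induct on $\mu$ using short exact sequences
\[
 0 \longrightarrow R/(I:x) \xrightarrow{\;\cdot x\;} R/I \longrightarrow R/(I+(x)) \longrightarrow 0 ,
\]
choosing $x$ to be an edge variable $x_{\{j,j+1\}}$ adjacent to an endpoint. Both $I:x$ and $I+(x)$ are, up to adjoining regular variables, canonical ideals of shorter strings in which some vertices have turned closed, so $\pd R/I \le \max\{\pd R/(I:x),\,\pd R/(I+(x))\}$ drives the induction. Since contracting an edge removes two or three vertices while adjoining one or two regular variables, the recursion is three--periodic in the length of each string of opens, which is exactly what manufactures the floors $\lfloor (n_i-1)/3\rfloor$; on the resonant residue patterns $n_1 \equiv n_s \equiv 1$ and $n_i \equiv 2 \pmod 3$ defining a $2$-special configuration, the two branches of the $\max$ tie and contribute the extra units counted by $M(\H)$, giving $\pd(\H) \le \mu(\H) - b(\H) + M(\H)$.

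For the matching lower bound I would invoke Hochster's formula $\beta_{i,W}(R/I(\H)) = \dim_K \tilde H_{|W|-i-1}(\Delta_W;K)$ and exhibit, for a maximal family of $M(\H)$ pairwise disjoint $2$-special configurations, a set $W$ of variables for which $\Delta_W$ decomposes as a join of the sphere-or-contractible complexes attached to the individual segments. Because each segment in a $2$-special configuration has length $\equiv \pm 1 \pmod 3$ it contributes a sphere, and since $S^a \ast S^b \simeq S^{a+b+1}$ a join of spheres is again a sphere; choosing $W$ so that these spheres accumulate produces a nonvanishing $\tilde H_{\ell}(\Delta_W)$ with $|W| - 1 - \ell = \mu(\H) - b(\H) + M(\H)$, whence $\beta_{\mu - b + M}(R/I(\H)) \ne 0$ and $\pd(\H) \ge \mu(\H) - b(\H) + M(\H)$. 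The disjointness hypothesis is precisely what guarantees that the contributions of distinct configurations sit in independent multidegrees and hence do not cancel.

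The main obstacle is the bookkeeping around $M(\H)$, that is, showing that the two bounds actually meet. On the upper side one must verify that the $\max$ in the recursion is attained without homological cancellation exactly on the resonant steps, and on the lower side one must both confirm that the predicted homology survives into the minimal resolution (the multidegree analysis above) and that no other $W$ can beat the combinatorial optimum $\mu - b + M$. Once the string case is settled, the cycle case follows by cutting the edge $\{\mu,1\}$: removing the corresponding variable through one further short exact sequence reduces a $\mu$-cycle to a string on $[\mu]$, and the wrap--around is exactly what permits the two extremal vertices of a $2$-special configuration to coincide, which is the only adjustment needed in the definition of $M(\H)$ in the cyclic setting.
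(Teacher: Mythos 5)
First, a point of order: the paper contains no proof of Theorem~\ref{LMpd} to compare against --- it is quoted verbatim from Lin--Mantero \cite[Theorems 3.4 and 4.3]{LM}, and the present article only uses it as a black box (together with the inductive Lemmas~\ref{LMpd-inductive}--\ref{LMpd-string-inductive-3}, also imported from \cite{LM}). So your proposal can only be judged as a reconstruction of the argument in \cite{LM}. Judged that way, the upper-bound half of your sketch is essentially the route taken there: \cite{LM} runs an induction on $\mu$ via short exact sequences of the form $0 \to R/(I_1 : m_1) \to R/I_1 \to R/I \to 0$ (splitting off a whole generator $m_1$ rather than an edge variable $x_{\{j,j+1\}}$, but in the same spirit), and the three-periodicity and the residues mod $3$ arise exactly as you describe, with the path/Kozlov computation as the base pattern. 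Your lower bound via Hochster's formula and joins of spheres is a genuinely different route: \cite{LM} extracts both inequalities from the same recursion by proving the $\max$ formula as an \emph{equality} under suitable hypotheses (cf.\ Lemma~\ref{LMpd-inductive}), which avoids any topological computation but requires more delicate case analysis; your approach buys a cleaner conceptual separation of the two bounds at the cost of an explicit homology computation for each $2$-special configuration.

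Two places where your sketch is loose enough to count as gaps. First, the claim that ``the two branches of the $\max$ tie and contribute the extra units counted by $M(\H)$'' is not a valid mechanism for an upper bound: a tie in a $\max$ adds nothing. The correct statement is that on the resonant residue patterns the inductive estimate degrades --- the best bound obtainable from the recursion for the smaller hypergraphs, after the shift, exceeds $\mu - b$ by one unit per disjoint $2$-special configuration --- and one must prove that these losses are bounded by $M(\H)$ and cannot compound beyond it; this is precisely the bookkeeping you defer, and it is the heart of the theorem. Second, for the Hochster lower bound there is no ``cancellation between multidegrees'' to worry about: the formula is multigraded, so a single witness $W$ with $\tilde H_{|W|-i-1}(\Delta_W;K) \neq 0$ for $i = \mu - b + M$ suffices, and the companion worry that ``no other $W$ can beat the optimum'' is redundant once the recursive upper bound is in place. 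What you actually owe is the verification that $\Delta_W$ is a join of the segment complexes for your chosen $W$ (restriction to a vertex subset does not automatically decompose as a join unless the segments are genuinely decoupled in $W$, which is where the closed vertices and the disjointness hypothesis must be used) and the arithmetic check that the resulting sphere dimension gives $|W| - 1 - \ell = \mu - b + M$. As it stands the proposal is a credible plan, not a proof.
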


\par
We also collect some inductive results about the projective dimension. 

\par
Let $I$ be a squarefree monomial ideal with 
$G(I) = \{ m_1, \ldots, m_{\mu} \}$. 
Then we set $I_i := (m_{i+1}, \ldots, m_{\mu})$ and $\H_i := \H (I_i)$. 
Also we set $J_1 := I_1 : m_1$ and $\mathcal{Q}_1 := \H (J_1)$. 
\begin{Lemma}[{\cite[Lemmas 2.6 and 2.11]{LM}}]
  \label{LMpd-inductive}
  Let $\H$ be a hypergraph on $[\mu]$ with $\mu \geq 2$. 
  Assume that $\{ 1 \} \in \H$. 
  Then $\pd (\H) = \max \{ \pd (\H_1), \pd (\mathcal{Q}_1) + 1 \}$. 
  Moreover, if all the neighbors of $1$ 
  are closed vertices, then $\pd (\H) = \pd (\H_1) + 1$. 
\end{Lemma}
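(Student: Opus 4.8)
The plan is to exploit the closed vertex $\{1\}$ through the decomposition $I = (m_1) + I_1$. Writing $J_1 = I_1 : m_1$, multiplication by $m_1$ produces the multigraded short exact sequence
\[
0 \to R/J_1 \xrightarrow{\;\cdot m_1\;} R/I_1 \to R/I \to 0,
\]
in which the first map is injective because its kernel is $(I_1 : m_1)/J_1 = 0$, while its cokernel is $R/((m_1)+I_1) = R/I$. From this I would pass to the long exact sequence in $\operatorname{Tor}^R_\bullet(-,K)$, so that $\pd(\H)$, $\pd(\H_1)$, $\pd(\mathcal{Q}_1)$ appear as the top nonvanishing indices of the three families of Tor modules.

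The conceptual heart is to show that every induced map $\operatorname{Tor}^R_i(R/J_1,K) \xrightarrow{\cdot m_1} \operatorname{Tor}^R_i(R/I_1,K)$ vanishes. This is exactly where $\{1\} \in \H$ enters: it supplies a variable $x$ with $x \mid m_1$ but $x \nmid m_j$ for all $j \geq 2$. Since $I_1$ involves no $x$, the minimal multigraded free resolution of $R/I_1$ is concentrated in multidegrees with $x$-exponent $0$, so $\operatorname{Tor}^R_i(R/I_1,K)_{\mathbf{a}} \neq 0$ forces $a_x = 0$. On the other hand $\cdot m_1$ is homogeneous of multidegree $\deg m_1$, whose $x$-component is at least $1$, so it lands in multidegrees with positive $x$-exponent, where the target is zero. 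Hence these maps are all zero, the long exact sequence breaks into short exact sequences
\[
0 \to \operatorname{Tor}^R_i(R/I_1,K) \to \operatorname{Tor}^R_i(R/I,K) \to \operatorname{Tor}^R_{i-1}(R/J_1,K) \to 0,
\]
and comparing top nonvanishing indices yields $\pd(\H) = \max\{\pd(\H_1), \pd(\mathcal{Q}_1)+1\}$, which is the first assertion.

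For the \textbf{moreover} statement I would show that the extra hypothesis forces $\mathcal{Q}_1 = \H_1$, whence $\pd(\mathcal{Q}_1) = \pd(\H_1)$ and the maximum is realized by the second term, giving $\pd(\H) = \pd(\H_1)+1$. Concretely, since $x$ is a nonzerodivisor on $R/I_1$ one has $J_1 = I_1 : m_1 = I_1 : m_1'$ with $m_1' = m_1/x$, and $m_1'$ is precisely the product of the variables whose faces join $1$ to its neighbors. Forming this colon strips exactly those ``connecting'' variables from the generators $m_k$ indexed by neighbors $k$ of $1$. If every such $k$ is closed, then each $m_k$ keeps its private variable (the one giving the face $\{k\}$), so it remains a distinct minimal generator and the singleton $\{k\}$ already belongs to $\H_1$; consequently deleting the connecting variables alters neither the vertex set $\{2,\dots,\mu\}$ nor the face set, i.e.\ $\H(J_1) = \H_1$.

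I expect the genuine obstacle to be the combinatorial verification in this last step: that $G(J_1)$ stays in bijection with $\{2,\dots,\mu\}$ and that the colon creates no new divisibilities among the $m_k$. This is where ``all neighbors closed'' is indispensable, since a single open neighbor would let a connecting variable survive as a genuinely new face of $\mathcal{Q}_1$ and destroy the identification $\mathcal{Q}_1 = \H_1$. By contrast, the Tor-vanishing argument is robust and needs only the closed vertex $\{1\}$.
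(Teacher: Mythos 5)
First, a caveat: the paper does not prove this lemma at all; it is imported verbatim from \cite[Lemmas 2.6 and 2.11]{LM}, so there is no in-paper proof to compare with and I am judging your argument on its own merits. Your proof of the first assertion is correct and is the standard route (indeed the one used in \cite{LM}): the exact sequence $0 \to R/J_1 \xrightarrow{\cdot m_1} R/I_1 \to R/I \to 0$, the observation that the closed vertex $1$ supplies a variable $x$ with $x \mid m_1$ but $x \nmid m_j$ for $j \geq 2$, so that every map $\operatorname{Tor}^R_i(R/J_1,K) \to \operatorname{Tor}^R_i(R/I_1,K)$ vanishes for multidegree reasons, and the resulting additivity $\beta_i(R/I)=\beta_i(R/I_1)+\beta_{i-1}(R/J_1)$, which gives $\pd(\H)=\max\{\pd(\H_1),\pd(\Q_1)+1\}$.

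The gap is in the ``moreover'' part: your key claim $\Q_1=\H_1$ is false for general hypergraphs, which is the generality in which the lemma is stated. Take $\H=\{\{1\},\{2\},\{3\},\{1,2,3\}\}$, so $I(\H)=(x_1t,\,x_2t,\,x_3t)$ with $t=x_{\{1,2,3\}}$. All neighbors of $1$ are closed, and $\H_1=\{\{2\},\{3\},\{2,3\}\}$; but $J_1=I_1:m_1=(x_2,x_3)$, so $\Q_1=\{\{2\},\{3\}\}$. The point is that \emph{every} variable dividing $m_1$ is stripped from \emph{all} generators of $J_1$ by the colon, so $\Q_1$ is $\H_1$ minus every face witnessed only by variables dividing $m_1$; contrary to what you assert, such faces need not survive (they do survive when all faces have size at most $2$, which covers the strings and cycles of this paper, but not the lemma as stated). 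What you actually need, and what is true, is the weaker equality $\pd(\Q_1)=\pd(\H_1)$. This requires an extra ingredient you do not supply: each deleted face consists entirely of vertices that are closed in $\H_1$ (the neighbors of $1$, closed by hypothesis), and deleting a face all of whose vertices are closed does not change the projective dimension --- for instance because the LCM lattices of the two ideals are isomorphic (for $S\subseteq\{2,\dots,\mu\}$, the collection of faces of $\Q_1$ meeting $S$ already determines, via the singletons, which vertices of the deleted face lie in $S$, hence whether the deleted face meets $S$), and $\pd$ depends only on the LCM lattice. In the example above both projective dimensions equal $2$, consistent with the lemma, but your argument as written does not establish this step.
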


Finally, for a string hypergraph $\H$, we will use the following results that allow us to compare $\pd(\H)$ with the projective dimension of a smaller string hypergraph.
\begin{Lemma}[{\cite[Lemma 2.14 (ii)]{LM}}]
  \label{LMpd-string-inductive-2}
  Let $\H$ be a string hypergraph on $[\mu]$ with $\mu \geq 3$.  
  Then $\pd (\H) \leq \pd (\H_2) + 2$. 
\end{Lemma}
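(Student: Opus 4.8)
The plan is to peel off the closed endpoint $1$ with Lemma \ref{LMpd-inductive}, and then peel off the new endpoint once more. Since $\H$ is a separated string hypergraph, its endpoint $1$ is a closed vertex, so $\{1\} \in \H$ and Lemma \ref{LMpd-inductive} gives
\[
  \pd (\H) = \max \{ \pd (\H_1), \, \pd (\mathcal{Q}_1) + 1 \}.
\]
I would therefore establish the two inequalities $\pd (\H_1) \leq \pd (\H_2) + 1$ and $\pd (\mathcal{Q}_1) \leq \pd (\H_2) + 1$, which together yield $\pd (\H) \leq \max \{ \pd (\H_2) + 1, \, \pd (\H_2) + 2 \} = \pd (\H_2) + 2$, as desired.

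For the first inequality I would describe $\H_1$ explicitly: deleting the generator $m_1$ collapses the face $\{1,2\}$ to the singleton $\{2\}$, so $\H_1$ is again a string hypergraph on $\{2, \ldots, \mu\}$ with closed endpoint $2$, and deleting $m_2$ from it returns exactly $\H_2 = (\H_1)_1$. Applying Lemma \ref{LMpd-inductive} to $\H_1$ at the closed vertex $2$ gives $\pd (\H_1) = \max \{ \pd (\H_2), \, \pd (\mathcal{Q}(\H_1)) + 1 \}$, so the first inequality reduces to showing $\pd (\mathcal{Q}(\H_1)) \leq \pd (\H_2)$. For the second inequality I would treat $\mathcal{Q}_1 = \H (I_1 : m_1)$ the same way: since $m_1 = x_{\{1\}} x_{\{1,2\}}$, passing to the colon strips the variable $x_{\{1,2\}}$ from $m_2$, and one checks again that $(\mathcal{Q}_1)_1 = \H_2$. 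Thus both terms are governed by colon/deletion operations that each move one step closer to $\H_2$.

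The main obstacle is controlling the hypergraphs produced by the colon operation. When the vertex adjacent to the endpoint being peeled is open, dividing by the shared variable turns the neighboring generator into a proper divisor of the next one; this both shortens the corresponding string of opens and can disconnect the result, so $\mathcal{Q}_1$ need not itself be a string. I would handle this by a case analysis on whether the relevant neighbor is open or closed, in each case reading off the invariants $s$, the $n_i$, and the modularity $M$ from the explicit description and then invoking Theorem \ref{LMpd} (together with the standard additivity of projective dimension over disjoint variable supports) to verify $\pd (\mathcal{Q}_1) \leq \pd (\H_2) + 1$ and $\pd (\mathcal{Q}(\H_1)) \leq \pd (\H_2)$. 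Alternatively, one may bypass the inductive step and argue purely from Theorem \ref{LMpd}: the claim is then equivalent to $M(\H) - M(\H_2) \leq b(\H) - b(\H_2)$, and the delicate part is bounding how much the largest family of pairwise disjoint $2$-special configurations can shrink once the first two vertices are removed and vertex $3$ is forced to become closed.
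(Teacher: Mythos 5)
The paper does not actually prove this lemma: it is imported verbatim from \cite[Lemma 2.14(ii)]{LM}, so there is no in-paper argument to compare yours against. Judged on its own terms, your strategy --- peel the closed endpoint $1$ via Lemma \ref{LMpd-inductive} to get $\pd(\H)=\max\{\pd(\H_1),\pd(\Q_1)+1\}$, then reduce to $\pd(\H_1)\leq\pd(\H_2)+1$ and $\pd(\Q_1)\leq\pd(\H_2)+1$ --- is sound, and both target inequalities are true. But the proposal stops exactly where the work is: you defer the verification to an unspecified case analysis, and the one concrete claim you do make in that direction, namely $(\Q_1)_1=\H_2$, is false when vertex $2$ is open. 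In that case $m_1=x_{\{1\}}x_{\{1,2\}}$ and $m_2=x_{\{1,2\}}x_{\{2,3\}}$, so $I_1:m_1=(x_{\{2,3\}},m_4,\ldots,m_\mu)$ (the generator $m_3$ is absorbed), and $\Q_1$ is the disjoint union of an isolated closed vertex with $\H_3$ --- you acknowledge the disconnection a sentence later, but the stated identity cannot be ``checked.'' The same issue recurs one level down for $\Q(\H_1)=\H(I_2:m_2)$ when vertex $3$ is open, and if you pursue the explicit-invariants route the case analysis does not terminate after one step: bounding $1+\pd(\H_4)$ by $\pd(\H_2)$ again splits on whether vertex $4$ is open, and so on.

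The clean way to close both gaps at once is the standard fact that for a monomial ideal $J$ and a monomial $m$ one has $\pd(R/(J:m))\leq\pd(R/J)$ (restrict the minimal free resolution to multidegrees divisible by $m$). With it, $\pd(\Q(\H_1))=\pd\bigl(R/(I_2:m_2)\bigr)\leq\pd(\H_2)$ gives $\pd(\H_1)=\max\{\pd(\H_2),\pd(\Q(\H_1))+1\}\leq\pd(\H_2)+1$, and $\pd(\Q_1)=\pd\bigl(R/(I_1:m_1)\bigr)\leq\pd(\H_1)\leq\pd(\H_2)+1$, whence $\pd(\H)\leq\pd(\H_2)+2$ with no case analysis at all. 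Either supply that lemma (it is part of the toolkit of \cite{LM}) or carry out the open/closed cases in full, using additivity of $\pd$ over the disjoint components of $\Q_1$ and Lemma \ref{LMpd-inductive} applied to $\H_2$ at its closed endpoint $3$ to get $\pd(\H_3)\leq\pd(\H_2)$; as written, the argument is a plausible outline rather than a proof. Your closing remark that the statement is equivalent, via Theorem \ref{LMpd}, to $M(\H)-M(\H_2)\leq b(\H)-b(\H_2)$ is a correct reformulation, but it is offered as an alternative without any argument and is, if anything, the more delicate route.
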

\begin{Lemma}[{\cite[Proposition 2.15]{LM}}]
  \label{LMpd-string-inductive-3}
  Let $\H$ be a string hypergraph on $[\mu]$ with $\mu \geq 4$.  
  Assume $\{ 2 \} \notin \H$. Then $\pd (\H) = \pd (\H_3) + 2$. 
\end{Lemma}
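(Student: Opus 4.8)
The plan is to reduce everything to the inductive formula of Lemma~\ref{LMpd-inductive} and then control the two terms of the resulting maximum. Since $\pd(\H)$ is defined only for separated hypergraphs, $\H$ is separated, so its endpoint $1$ is a closed vertex and $\{1\}\in\H$. Hence Lemma~\ref{LMpd-inductive} applies and gives $\pd(\H)=\max\{\pd(\H_1),\,\pd(\mathcal{Q}_1)+1\}$. The whole argument then amounts to showing that $\pd(\mathcal{Q}_1)+1=\pd(\H_3)+2$ and that the first term $\pd(\H_1)$ cannot exceed this value.

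For the second term I would compute $\mathcal{Q}_1=\H(J_1)$ explicitly, working in the canonical realization $I(\H)=I$ (legitimate by Proposition~\ref{class-hypergraph}), where $m_j=\prod_{F\in\H,\,j\in F}x_F$. The only faces of $\H$ meeting vertex $1$ are $\{1\}$ and $\{1,2\}$, so $m_1=x_{\{1\}}x_{\{1,2\}}$. Computing the colon $J_1=I_1:m_1$ generator by generator, the crucial input is the hypothesis $\{2\}\notin\H$: it forces $m_2=x_{\{1,2\}}x_{\{2,3\}}$, so that $m_2:m_1=x_{\{2,3\}}$, while $m_j:m_1=m_j$ for $j\ge 3$ since those generators are coprime to $m_1$. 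Because $x_{\{2,3\}}\mid m_3$, the generator $m_3$ becomes redundant, leaving $J_1=(x_{\{2,3\}},m_4,\ldots,m_\mu)$. Now $x_{\{2,3\}}$ is coprime to each of $m_4,\ldots,m_\mu$, so $J_1$ decomposes as a sum of two ideals in disjoint sets of variables: the principal ideal $(x_{\{2,3\}})$, of projective dimension $1$, and $I_3=(m_4,\ldots,m_\mu)$. By additivity of projective dimension over disjoint variable sets, $\pd(\mathcal{Q}_1)=\pd R/J_1=1+\pd(\H_3)$, and therefore $\pd(\mathcal{Q}_1)+1=\pd(\H_3)+2$.

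It remains to bound the first term. Here I would observe that $\H_1$ is itself a string hypergraph, on the $\mu-1\ge 3$ vertices $\{2,\ldots,\mu\}$, and that deleting its first two vertices returns $(\H_1)_2=\H_3$. Thus Lemma~\ref{LMpd-string-inductive-2}, applied to $\H_1$, yields $\pd(\H_1)\le\pd((\H_1)_2)+2=\pd(\H_3)+2$. Combining the two estimates gives $\pd(\H)=\max\{\pd(\H_1),\,\pd(\H_3)+2\}=\pd(\H_3)+2$, as desired.

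I expect the only delicate step to be the explicit determination of $J_1$ and the recognition that it splits into coprime pieces; this is precisely where the assumption $\{2\}\notin\H$ enters, guaranteeing that the colon collapses $m_2$ to the single variable $x_{\{2,3\}}$ rather than to a larger monomial, and that $m_3$ drops out. Everything else is a bookkeeping combination of the cited lemmas.
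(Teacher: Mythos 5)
Your argument is correct, but there is nothing in the paper to compare it against: the paper does not prove this lemma, it imports it verbatim from Lin--Mantero \cite[Proposition 2.15]{LM}. What you have done is reconstruct the result from the other two quoted ingredients, Lemmas \ref{LMpd-inductive} and \ref{LMpd-string-inductive-2}, and the reconstruction holds up. The colon computation is exactly right: the hypothesis $\{2\}\notin\H$ forces $m_2=x_{\{1,2\}}x_{\{2,3\}}$, so $m_2:m_1=x_{\{2,3\}}$, which absorbs $m_3$ and is coprime to $m_4,\ldots,m_\mu$, giving $J_1=(x_{\{2,3\}})+I_3$ in disjoint variables and hence $\pd(\mathcal{Q}_1)=\pd(\H_3)+1$. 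This identification of $\mathcal{Q}_1$ as the disjoint union of a closed vertex with $\H_3$ is precisely the computation the paper itself carries out inside the proof of Lemma \ref{cycle2ad-closed}, so your reduction is consistent with how the authors use these objects. The bound on the other branch of the maximum is also a legitimate application of Lemma \ref{LMpd-string-inductive-2}: $\H_1$ is a separated string on $\mu-1\ge 3$ vertices (the variable $x_{\{1,2\}}$ closes its endpoint $2$) and $(\H_1)_2=\H_3$, so $\pd(\H_1)\le\pd(\H_3)+2$. The only boundary case worth a sentence is $\mu=4$, where $\H_3$ is a single closed vertex rather than a string on at least two vertices; the quoted lemmas still apply there with $\pd(\H_3)=1$, so nothing breaks. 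Whether your derivation coincides with the one in \cite{LM} cannot be determined from this document, but as a self-contained proof it is sound and uses only tools already stated in the paper.
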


\section{Strings}
\label{sec:string}
In this section, we consider string hypergraphs. 
The goal of this section is to prove the following result.

\begin{Theorem}\label{strings}
  Let $\H$ be a string hypergraph. Then $\ara (\H) = \biara (\H) = \pd (\H)$. 
\end{Theorem}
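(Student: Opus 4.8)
The plan is to prove the two inequalities $\pd(\H) \leq \ara(\H) \leq \biara(\H)$ (the first from Lyubeznik, the second trivial) are actually equalities for string hypergraphs, so the real content is constructing $\pd(\H)$ binomials/monomials that generate $I(\H)$ up to radical. I would work with the canonical ideal $I = I(\H)$ in $R(\H)$ and induct on $\mu$, the number of vertices. Using Theorem \ref{LMpd}, I have the explicit value $\pd(\H) = \mu - b(\H) + M(\H)$ to hit as a target, and the inductive lemmas (Lemmas \ref{LMpd-inductive}, \ref{LMpd-string-inductive-2}, \ref{LMpd-string-inductive-3}) tell me exactly how $\pd$ behaves when I strip off the first one or two vertices of the string. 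The engine for packaging generators into few radical-equivalent elements will be the Schmitt--Vogel lemma (\ref{SVLemma}), which lets one combine a collection of monomials into fewer elements provided they can be arranged into groups where any two monomials in the same group have a product divisible by some monomial in an earlier group.

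The key structural feature of a string is that consecutive generators $m_j, m_{j+1}$ share the variable $x_{\{j,j+1\}}$ while non-consecutive ones are coprime (apart from shared closed-vertex variables). First I would set up the induction by peeling vertices from the closed endpoint $1$. The case analysis should mirror the three inductive lemmas: when $\{1\}\in\H$ and all neighbors of $1$ are closed, $\pd$ drops by exactly $1$ upon removing vertex $1$, and I can peel $m_1$ off as a single new radical generator (its support is essentially disjoint from the rest after accounting for the shared closed variable, so one binomial suffices). When $\{2\}\notin\H$, Lemma \ref{LMpd-string-inductive-3} gives $\pd(\H)=\pd(\H_3)+2$, so I would consume vertices $1,2,3$ and produce exactly two binomials generating $(m_1,m_2,m_3)$ up to radical modulo $I_3$; the binomial structure is forced by the fact that $m_1,m_2$ share a variable and $m_2,m_3$ share a variable, allowing Schmitt--Vogel with two groups. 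The modularity term $M(\H)$ enters precisely in the strings-of-opens where a $2$-special configuration blocks the naive $b(\H)$ count, and those are the segments where one cannot save a generator and must pay one extra.

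The main obstacle, as I see it, is bookkeeping the strings of opens and the $2$-special configurations so that the number of binomials I produce matches $\mu - b(\H) + M(\H)$ on the nose, neither more nor fewer. Within a single string of opens of length $n_i$, the generators form a chain where each consecutive pair shares a variable; the Schmitt--Vogel grouping over such a chain lets me compress roughly $n_i$ generators into about $n_i - \lfloor (n_i-1)/3\rfloor$ radical elements, which is where the floor term in $b(\H)$ comes from. The delicate point is that at the junctions between strings of opens (the closed vertices) and at the two endpoints, the arithmetic of how many elements I can merge depends on the residues $n_i \bmod 3$, and the $2$-special configuration is exactly the obstruction pattern ($n_1\equiv n_s\equiv 1$, interior $n_i\equiv 2$, no two adjacent closed vertices) where the merging loses one element globally even though it looks locally optimal. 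I would therefore organize the construction so that each maximal run of strings of opens is handled by one application of Schmitt--Vogel, carefully verifying the divisibility hypotheses at the grouping boundaries, and then check that the total count equals $\pd(\H)$ by comparing term-by-term with the formula; establishing that the global count cannot be beaten is automatic from Lyubeznik's lower bound $\pd(\H)\leq\ara(\H)$, which is why matching $\pd(\H)$ from above is all that remains.
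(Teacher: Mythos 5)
Your proposal matches the paper's proof: induction on $\mu$, peeling off vertex $1$ as a single extra monomial when vertex $2$ is closed (so $\pd$ drops by $1$ via Lemma \ref{LMpd-inductive}), and peeling off vertices $1,2,3$ as two extra elements obtained from Schmitt--Vogel when vertex $2$ is open (so $\pd$ drops by $2$ via Lemma \ref{LMpd-string-inductive-3}), with Lyubeznik's lower bound closing the sandwich. The only remark worth making is that the bookkeeping with $b(\H)$ and $M(\H)$ that you identify as the main obstacle is not needed at all: the two inductive lemmas guarantee that the element count matches $\pd(\H)$ exactly at every step, so the explicit formula of Theorem \ref{LMpd} never has to be unwound in the string case.
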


Before proving the theorem, we introduce a useful lemma 
by Schmitt and Vogel \cite{SV}. 
\begin{Lemma}[{\cite[Lemma, p.\  249]{SV}}]
  \label{SVLemma}
  Let $R$ be a commutative ring and $P$ a finite subset of $R$. 
  Let $P_0, P_1, \ldots, P_u$ be subsets of $P$ satisfying the following 
  $3$ conditions: 
  \begin{enumerate}
  \item[(SV1)] $\bigcup_{\ell = 0}^u P_\ell = P$. 
  \item[(SV2)] $\# P_0 = 1$. 
  \item[(SV3)] For any integer $\ell >0$ and elements $p, p'' \in P_{\ell}$ 
    with $p \neq p''$, 
    there exist an integer $\ell' < \ell$ and an element $p' \in P_{\ell'}$ 
    such that $pp'' \in (p')$. 
  \end{enumerate}

  \par
  Let $I$ be an ideal of $R$ generated by $P$ and set 
  \begin{displaymath}
    q_{\ell} = \sum_{p \in P_{\ell}} p, \qquad \ell = 0, 1, \ldots, u. 
  \end{displaymath}
  Then $q_0, q_1, \ldots, q_{u}$ generate $I$ up to radical. 
\end{Lemma}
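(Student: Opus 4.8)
The plan is to prove the two inclusions $\sqrt{(q_0,\ldots,q_u)} \subseteq \sqrt{I}$ and $\sqrt{I} \subseteq \sqrt{(q_0,\ldots,q_u)}$ separately, writing $J := (q_0, q_1, \ldots, q_u)$. The first inclusion is immediate: each $q_\ell$ is a sum of elements of $P \subseteq I$, hence $q_\ell \in I$, so $J \subseteq I$ and thus $\sqrt{J} \subseteq \sqrt{I}$. The content of the lemma lies in the reverse inclusion. Since $I$ is generated by $P$ and $\sqrt{J}$ is a radical ideal, it suffices to show that every element $p \in P$ lies in $\sqrt{J}$.

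I would establish this by induction on the level index $\ell$; concretely, I prove by induction on $\ell = 0, 1, \ldots, u$ the statement that every $p \in P_\ell$ belongs to $\sqrt{J}$. Condition (SV1) guarantees that, once this is shown for all $\ell$, every element of $P$ has been accounted for (overlaps between the $P_\ell$ cause no difficulty, since we only need each $p$ to be reached at one level). For the base case $\ell = 0$, condition (SV2) gives $P_0 = \{p_0\}$, so $q_0 = p_0 \in J \subseteq \sqrt{J}$.

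For the inductive step, fix $\ell > 0$ and assume that $p' \in \sqrt{J}$ for every $p' \in P_{\ell'}$ with $\ell' < \ell$. The key observation is that condition (SV3) forces all the off-diagonal products into $\sqrt{J}$: given distinct $p, p'' \in P_\ell$, there exist $\ell' < \ell$ and $p' \in P_{\ell'}$ with $p\,p'' \in (p')$; by the inductive hypothesis $p' \in \sqrt{J}$, and since $\sqrt{J}$ is an ideal this gives $p\,p'' \in (p') \subseteq \sqrt{J}$. The decisive trick is then to isolate a single generator by multiplying $q_\ell$ by it. For any fixed $p \in P_\ell$ one has the identity
\[
  p^2 \;=\; p\,q_\ell \;-\; \sum_{p'' \in P_\ell,\ p'' \neq p} p\,p''.
\]
On the right-hand side, $p\,q_\ell \in \sqrt{J}$ because $q_\ell \in J$, and every summand $p\,p''$ lies in $\sqrt{J}$ by the previous observation; therefore $p^2 \in \sqrt{J}$. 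As $\sqrt{J}$ is radical, this yields $p \in \sqrt{J}$, completing the induction.

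The step I expect to carry the real weight is recognizing that (SV3) is exactly what is needed to push every cross product $p\,p''$ down to a strictly earlier level, where the inductive hypothesis applies; once this is in place, the displayed identity does the rest, and the radical closure absorbs the quadratic term $p^2$. There are no delicate estimates or case distinctions — the only genuine subtlety is organizing the induction on $\ell$ so that the condition (SV3) links each level to its predecessors in the required way.
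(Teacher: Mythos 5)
Your proof is correct and is exactly the standard argument for this lemma: the paper itself offers no proof (it quotes the result directly from Schmitt--Vogel \cite{SV}), and your induction on the level $\ell$, using (SV3) to push the cross products $p\,p''$ into $\sqrt{(q_0,\ldots,q_u)}$ and the identity $p^2 = p\,q_\ell - \sum_{p''\neq p} p\,p''$ to conclude, reproduces the original argument of the cited reference. Nothing to correct.
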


\par
We first see the case where the number of vertices is 
less than or equal to $3$. 
\begin{Lemma}\label{3}
  Let $\H$ be a string hypergraph on $[\mu]$. 
  If $\mu \leq 3$, then $\ara (\H) = \biara (\H) = \pd (H)$. 
\end{Lemma}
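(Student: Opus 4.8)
The plan is to reduce to a short, finite case analysis, since $\mu \le 3$ forces $\H$ into only a few shapes. Recall that $\H = \H(I)$ is separated, so by the observation following Definition \ref{defn:string} its endpoints $1$ and $\mu$ are closed vertices; and since a string requires $\mu \ge 2$, we have $\mu \in \{2,3\}$. For $\mu = 2$ the only possibility is the saturated string $\{\{1\},\{2\},\{1,2\}\}$; for $\mu = 3$ the endpoints $1,3$ are closed while the middle vertex $2$ is either closed (the saturated string) or open. Thus there are exactly three hypergraphs to treat, and I would work throughout with the canonical ideal $I(\H) \subset R(\H)$, which by Proposition \ref{class-hypergraph} and Remark \ref{bi-tri-ara} computes $\pd(\H)$, $\ara(\H)$ and $\biara(\H)$.

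Next I would record the inequalities that frame everything: Lyubeznik's bound gives $\pd(\H) \le \ara(\H)$, while the minimal monomial generators of $I$ (being in particular monomials) give $\biara(\H) \le \mu(\H)$; together with the trivial $\ara(\H) \le \biara(\H)$ we obtain $\pd(\H) \le \ara(\H) \le \biara(\H) \le \mu(\H)$. So it suffices to show $\biara(\H) \le \pd(\H)$ in each case. Computing $\pd(\H) = \mu(\H) - b(\H) + M(\H)$ via Theorem \ref{LMpd}: in the $\mu = 2$ case and the $\mu = 3$ saturated case there are no strings of opens and no $2$-special configurations, so $b(\H) = M(\H) = 0$ and hence $\pd(\H) = \mu(\H)$. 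In these two cases the outer inequalities already collapse, and the monomial generators themselves realize $\biara(\H) = \mu(\H) = \pd(\H)$; nothing further is needed.

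The only genuine case is $\mu = 3$ with the middle vertex open. Here $\H$ is a single string of opens with $n_1 = 1$, so $b(\H) = 1$, $M(\H) = 0$, and $\pd(\H) = 3 - 1 = 2$, whereas $\mu(\H) = 3$; writing out $I(\H)$ explicitly (its three generators share the two interior-edge variables along the string), one must generate it up to radical by only two elements. I would invoke the Schmitt--Vogel Lemma \ref{SVLemma} with $u = 1$: put the middle generator alone in $P_0$ and the two endpoint generators in $P_1$. Condition (SV3) then amounts to checking that the product of the two endpoint generators lies in $(\,\text{middle generator}\,)$, which holds because the middle generator is the product of the two interior-edge variables, one of which it shares with each endpoint generator, so it divides that product. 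This yields one monomial $q_0$ and one binomial $q_1$ generating $I$ up to radical, so $\biara(\H) \le 2 = \pd(\H)$, closing the chain of inequalities.

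I expect the only point requiring care to be the verification of (SV3) in this last case, i.e.\ exhibiting the correct two-block Schmitt--Vogel partition and confirming the divisibility; everything else is bookkeeping forced by the smallness of $\mu$. As a sanity check I would also recompute $\pd(\H) = 2$ in this case directly from Lemma \ref{LMpd-inductive}, splitting off the closed endpoint $1$: this gives $\max\{\pd(\H_1), \pd(\Q_1)+1\} = \max\{2,2\} = 2$, confirming the value coming from the formula.
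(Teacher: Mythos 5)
Your proposal is correct and follows essentially the same route as the paper: dispose of the saturated cases by noting $\pd(\H)=\mu$ so the monomial generators suffice, and in the one nontrivial case ($\mu=3$ with vertex $2$ open, $I(\H)=(y_1x_1,\,x_1x_2,\,y_3x_2)$) apply Schmitt--Vogel with $P_0=\{x_1x_2\}$ and $P_1=\{y_1x_1,\,y_3x_2\}$ to get the two elements $x_1x_2$ and $y_1x_1+y_3x_2$. The divisibility check for (SV3) and the computation $\pd(\H)=2$ are exactly as in the paper.
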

\begin{proof}
  If $\H$ is saturated, then $\pd (\H) = \mu$ and there is nothing to prove. 
  The remaining case is that $\mu = 3$ and the vertex $2$ of $\H$ is open. 
  Then $I (\H) = (y_1 x_1, x_1 x_2, y_3x_2)$. 
  In this case $\pd (\H ) =2$. By Lemma \ref{SVLemma}, 
  we have $x_1x_2, y_1 x_1+ y_3 x_2$ generate $I(\H)$ up to radical. 
\end{proof}

Next we assume $\mu \geq 4$. We divide the proof into two cases, depending on whether the vertex $2$ 
is closed or open. 
\begin{Lemma}\label{+1}
  Let $\H$ be a string hypergraph on $[\mu]$. 
  Assume the neighbor $2$ of the endpoint $1$ of $\H$ is closed.
  If $\biara (\H_1) = \pd (\H_1)$, then $\biara (\H) = \pd (\H)$.
\end{Lemma}
\begin{proof}
  We first note that $\biara (\H) \leq \biara (\H_1) + 1$ 
  since $I(\H)$ has one more generator than $I(\H_1)$. 
  We then have the chain of inequalities
  $$\biara (\H) \leq \biara (\H_1) + 1 
    = \pd (\H_1) + 1 = \pd (\H) \leq \biara (\H),$$
  where the last equality follows by Lemma \ref{LMpd-inductive}. 
  Therefore, $\biara (\H) = \pd (\H)$.
\end{proof}

\begin{Lemma}\label{+2}
  Let $\H$ be a string hypergraph on $[\mu]$ with $\mu \geq 4$. 
  Assume the neighbor $2$ of the endpoint $1$ of $\H$ is open.
  If $\biara (\H_3) = \pd (\H_3)$, then $\biara (\H) = \pd (\H)$.
\end{Lemma}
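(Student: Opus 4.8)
The plan is to mimic the inductive strategy used in Lemma~\ref{+1}, but since the neighbor $2$ of the endpoint $1$ is now \emph{open}, we cannot peel off a single generator while keeping the projective dimension under control; instead we peel off \emph{two} vertices and compare $\H$ with $\H_3$. The key structural fact we rely on is Lemma~\ref{LMpd-string-inductive-3}, which (using $\{2\}\notin\H$, i.e.\ that vertex $2$ is open) gives the exact equality $\pd(\H)=\pd(\H_3)+2$. This is what lets the induction close with an increment of exactly $2$.

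First I would record the ideal-theoretic setup. Writing $I=I(\H)=(m_1,\ldots,m_\mu)$ in $R(\H)$, the endpoint $1$ is closed, so $m_1=x_{\{1\}}x_{\{1,2\}}$ carries the private variable $x_{\{1\}}$; vertex $2$ is open, so $m_2=x_{\{1,2\}}x_{\{2,3\}}$ involves no private variable of its own. My aim is to produce a set of generators up to radical for $I$ whose cardinality is $\biara(\H_3)+2=\pd(\H_3)+2=\pd(\H)$, where the last two equalities are Lemma~\ref{LMpd-string-inductive-3} together with the inductive hypothesis $\biara(\H_3)=\pd(\H_3)$. By definition $\biara(\H)\ge\ara(\H)\ge\pd(\H)$ via Lyubeznik's bound (cited in the introduction), so it suffices to prove the upper bound $\biara(\H)\le\pd(\H)$.

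The main step is the explicit construction of the two extra binomials/monomials that ``absorb'' the generators $m_1$ and $m_2$ on top of an optimal system for $I_3=(m_4,\ldots,m_\mu)$. Concretely, I would take binomial/monomial generators $q_0,\ldots,q_{\pd(\H_3)-1}$ realizing $\biara(\H_3)$, and then adjoin two more elements built from $m_1,m_2,m_3$ in the spirit of Lemma~\ref{3} and Lemma~\ref{SVLemma}: a monomial capturing the ``middle'' generator $m_2$ (or the product-type relation $m_1m_3$ through the shared variable $x_{\{2,3\}}$) and a binomial of the form $m_1+m_3$ whose square-class, modulo the monomial just chosen, recovers both $m_1$ and $m_3$. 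I would then verify conditions (SV1)--(SV3) of Lemma~\ref{SVLemma} by organizing the $P_\ell$ so that the new layers sit above the layers coming from the $\H_3$-system, and checking that every product of two elements in a common layer lies in the ideal generated by an element of a strictly earlier layer.

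The hard part will be the Schmitt--Vogel bookkeeping at the interface between the freshly added layers for $\{m_1,m_2,m_3\}$ and the inductively supplied layers for $I_3$: I must guarantee that the partial-ordering of the $P_\ell$ is globally consistent, i.e.\ that condition (SV3) holds not just within the old or new layers separately but also across the seam where a product like $m_3\,m_4$ appears. This is exactly where the separatedness of $\H$ and the coprimality of the variables $x_F$ are essential, since they force such cross-products to be divisible by a monomial that can be placed in an earlier layer. Once this compatibility is established, the count gives $\biara(\H)\le\pd(\H)$, and combining with the lower bound yields $\biara(\H)=\pd(\H)$, completing the induction.
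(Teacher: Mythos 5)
Your overall strategy coincides with the paper's: split off the first three generators as $I' = (m_1,m_2,m_3)$, observe that $\H(I')$ is a $3$-vertex string with open middle vertex so that two elements (the monomial $m_2$ and the binomial $m_1+m_3$, as in Lemma \ref{3}) generate $I'$ up to radical, and combine $\pd(\H)=\pd(\H_3)+2$ from Lemma \ref{LMpd-string-inductive-3} with Lyubeznik's lower bound. All of that is right.

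The problem is the step you single out as ``the hard part'': verifying (SV1)--(SV3) globally across the seam between your two new layers and the inductively supplied system for $I_3$. This step is both unnecessary and, as described, not executable. It is not executable because the inductive hypothesis $\biara(\H_3)=\pd(\H_3)$ only provides $\pd(\H_3)$ binomials and monomials whose radical is $I_3$; it does not provide a Schmitt--Vogel layering of $G(I_3)$, so there are no ``layers coming from the $\H_3$-system'' above which to place your new ones. It is unnecessary because no compatibility at the seam is needed: if $\sqrt{(a_1,\ldots,a_r)}=I_3$ and $\sqrt{(b_1,b_2)}=I'$, then $\sqrt{(a_1,\ldots,a_r,b_1,b_2)}$ contains both $I_3$ and $I'$, hence equals $\sqrt{I_3+I'}=I(\H)$; in other words $\biara(I_3+I')\leq\biara(I_3)+\biara(I')$. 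This elementary subadditivity is exactly how the paper concludes, via $\biara(\H)\leq\biara(I_3)+2=\pd(\H_3)+2=\pd(\H)\leq\biara(\H)$. Replacing your global Schmitt--Vogel bookkeeping by that one line completes the proof. (Note also that condition (SV3) only constrains pairs of elements lying in the \emph{same} $P_\ell$, so a cross-product such as $m_3m_4$ would never need to be examined unless you chose to put $m_3$ and $m_4$ into a common layer.)
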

\begin{proof}
  Write $I(\H) = I_3 + I'$ where $I_3 = I(\H_3) = (m_4, \ldots, m_{\mu})$ 
  and $I'=(m_1,m_2,m_3)$. 
  Note that $\H (I')$ is a string hypergraph on the vertex set $[3]$. 
  Since the vertex $2$ of $\H (I')$ is open, we have $\biara I' = 2$ 
  by Lemma \ref{3}. 
  We then have 
  $$\biara (\H) = \biara (I_3 + I') \leq \biara (I_3) + \biara (I') 
    = \biara (I_3) + 2 = \pd (\H_3) + 2.$$
  Since $\pd (\H_3) +2=\pd (\H)$ by Lemma \ref{LMpd-string-inductive-3}, 
  and $\pd (\H) \leq \biara (\H)$ always holds, we have 
  $\biara (\H) = \pd (\H)$.
\end{proof}

We can now prove Theorem \ref{strings}.\\
{\bf Proof of Theorem \ref{strings}.} 
  We prove it by induction on the number $\mu$ of vertices of $\H$.

  \par
  If $\mu \leq 3$, then the statement follows by Lemma \ref{3}. 
  We may then assume $\mu \geq 4$ and the statement is proved 
  for string hypergraphs with less than $\mu$ vertices. 
  Then both $\biara (\H_1) = \pd (\H_1)$ and $\biara (\H_3) = \pd (\H_3)$ hold, 
  and the assertion follows from Lemmas \ref{+1} and \ref{+2}. 

\QED
\bigskip

\section{Cycles}
\label{sec:cycle}
In this section, we consider cycle hypergraphs. 
The goal of this section is to prove the following result. 
\begin{Theorem}
  \label{cycles}
  Let $\H$ be a cycle hypergraph. 
  Then $\ara (\H) = \triara (\H) = \pd (\H)$. 
%
\end{Theorem}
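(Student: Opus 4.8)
The plan is to mirror the inductive strategy used for strings in Theorem~\ref{strings}, but to handle the extra edge $\{\mu,1\}$ that distinguishes a cycle from a string. The base of the induction will be small cycles (say $\mu=3$, and perhaps $\mu=4$), where $I(\H)$ can be written down explicitly and the Schmitt--Vogel Lemma~\ref{SVLemma} can be applied by hand to exhibit $\pd(\H)$ elements generating the ideal up to radical; this is exactly where trinomials will first be forced to appear, because the cyclic identification makes a single variable divide several generators at once. For the inductive step I would reduce a cycle to a string by breaking the cycle at a suitable closed vertex. Concretely, since $\pd(\H)\le\triara(\H)$ always holds, the whole problem is to produce an \emph{upper} bound $\triara(\H)\le\pd(\H)$, and the natural mechanism is to write $I(\H)=I''+I'$ where $I''$ corresponds to a string sub-hypergraph (handled by Theorem~\ref{strings}, giving $\biara=\pd$) and $I'$ collects the few generators incident to the closing edge $\{\mu,1\}$.

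First I would dispose of the degenerate cases: if $\H$ is saturated (every vertex closed) then $\pd(\H)=\mu$ and the $\mu$ generators themselves, being pairwise coprime monomials, force $\ara=\mu$, so nothing is to prove. Next I would treat the case where $\H$ has at most one closed vertex separately, since the modularity formula in Theorem~\ref{LMpd} is defined differently there ($s=1$, $n_1=\mu-1$, and the two extremal vertices of a $2$-special configuration may coincide). For the main range I would locate a closed vertex, relabel it as the vertex $1$, and observe that deleting the face $\{\mu,1\}$ yields a string hypergraph on the same vertex set whose arithmetical rank I already control. The key bookkeeping is to compare $\pd$ of the cycle with $\pd$ of this string: using Lemma~\ref{LMpd-inductive} at the closed vertex $1$, together with Theorem~\ref{LMpd}, I expect that the cyclic edge costs at most one extra element in the Schmitt--Vogel construction, and that this extra element is where a trinomial (rather than a binomial) is needed.

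The technical heart is the Schmitt--Vogel grouping. I would build the partition $P_0,\dots,P_u$ of the generating set $P=G(I(\H))$ so that $u=\pd(\H)$, reusing the grouping that proves the string case for the generators lying along the ``opened'' path, and then carefully inserting the one or two generators adjacent to the edge $\{\mu,1\}$ into the existing levels. Condition (SV3) requires that for any two generators $p,p''$ placed in the same level $P_\ell$, their product lie in $(p')$ for some earlier $p'$; because the cyclic closure can make a level contain three mutually interacting generators sharing the variable $x_{\{\mu,1\}}$, the sum $q_\ell=\sum_{p\in P_\ell}p$ may be a genuine trinomial, which is exactly why $\triara$, and not $\biara$, is the right invariant for cycles.

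The main obstacle I anticipate is verifying (SV3) at the level(s) touching the closing edge, and ensuring the total number of levels is exactly $b(\H)-M(\H)$ fewer than $\mu$, so that $u=\pd(\H)$ on the nose rather than $u=\pd(\H)+1$. The delicate point is the modularity term $M(\H)$: a $2$-special configuration lets one save an element by merging two levels, and in the cyclic case such a configuration can ``wrap around'' the edge $\{\mu,1\}$ (with coinciding extremal vertices), so the grouping must be chosen compatibly with a maximal family of pairwise disjoint $2$-special configurations. I expect to organize the induction by the residues $n_i \bmod 3$ exactly as in Lemmas~\ref{LMpd-string-inductive-2} and \ref{LMpd-string-inductive-3}, peeling off two or three vertices at a time, and to reserve the trinomial for the single level that records the cyclic adjacency.
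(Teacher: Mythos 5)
Your overall architecture (small base cases, the at-most-one-closed-vertex case treated separately, Schmitt--Vogel throughout, a trinomial reserved for the cyclic adjacency) matches the paper's, but the central reduction you propose --- break the cycle at one closed vertex, treat the resulting string by Theorem~\ref{strings}, and pay ``at most one extra element'' for the closing edge --- does not work in general, and this is precisely the gap that the paper's Lemmas~\ref{cycle3ad-open-0mod3}, \ref{cycle3ad-open}, \ref{cycle-cooc} and \ref{cycle-co} exist to fill. The accounting $\pd(\H)=\pd(\H_1)+1$ (string obtained by dropping $m_1$) holds only when the cycle has two \emph{adjacent} closed vertices; that is exactly the content of Lemma~\ref{cycle2ad-closed}, via $\pd(\H)=\max\{\pd(\H_1),\pd(\H_3)+2\}$. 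In general the difference between $\pd$ of the cycle and $\pd$ of a string obtained by opening it depends on the residues $n_i \bmod 3$ of the strings of opens and on wrap-around $2$-special configurations, and is not uniformly $+1$; note also that $I(\H)$ is not ``the ideal of the broken string plus one generator,'' since deleting the face $\{\mu,1\}$ changes the monomials $m_1$ and $m_\mu$ themselves. Your proposal names this difficulty (getting $u=\pd(\H)$ ``on the nose'') but offers no mechanism that resolves it.

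The paper instead runs a cascade of structural reductions absent from your plan: (i) if some string of opens has length $\equiv 0 \bmod 3$, delete three consecutive open vertices to reach a \emph{string} hypergraph with $\pd$ dropping by exactly $2$ (Lemma~\ref{cycle3ad-open-0mod3}); (ii) if a string of opens has at least $3$ open vertices, turn a well-chosen open vertex closed --- i.e.\ specialize its variable to $1$ --- without changing $\pd$, reducing to cycles whose strings of opens have length at most $2$ (Lemma~\ref{cycle3ad-open}); (iii) contract a closed--open--open--closed segment to a \emph{smaller cycle} whose new vertex carries a degree-six monomial $g_0$, at the cost of two binomials (Lemma~\ref{cycle-cooc}); (iv) for the remaining alternating cycles, explicit Schmitt--Vogel groupings with trinomials, plus an induction that removes a five-vertex $2$-special configuration and identifies its endpoints, giving $\pd(\H)=\pd(\H')+3$ for the cost of two monomials and one trinomial (Lemma~\ref{cycle-co}). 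Steps (ii)--(iv) --- specialization of variables and contractions that land back in the class of \emph{cycles} rather than strings --- are the technical heart of the argument; your plan of peeling off vertices ``as in Lemmas~\ref{LMpd-string-inductive-2} and \ref{LMpd-string-inductive-3}'' cannot replace them, because those lemmas concern strings and give no control over the wrap-around modularity term. As written, the proposal is a correct diagnosis of where the difficulty lies, not a proof.
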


\par
We first consider the case where $\H$ contains at most $1$ closed vertex. 
\begin{Lemma}
  \label{cycle1closed}
  Let $\H$ be a cycle hypergraph. 
  If $\H$ contains at most $1$ closed vertex, 
  then $\triara (\H) = \biara(\H) = \pd(\H)$.
\end{Lemma}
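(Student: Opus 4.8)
The plan is to reduce everything to an explicit, small ideal and then invoke the Schmitt--Vogel Lemma \ref{SVLemma} directly. First I would pin down the structure of a cycle hypergraph $\H$ with at most one closed vertex. By Definition \ref{defn:cycle}, $\H = \widetilde{\H} \cup \{\{\mu,1\}\}$ where $\widetilde{\H}$ is a string; if $\H$ has no closed vertex at all then all $\mu$ vertices are open and the only faces are the $\mu$ edges $\{1,2\}, \{2,3\}, \ldots, \{\mu-1,\mu\}, \{\mu,1\}$, while if exactly one vertex is closed we may as well assume (after relabeling around the cycle) that it is vertex $1$. I would then write down $I(\H)$ explicitly: to each edge $F$ we assign a distinct variable $x_F$, so the generator $m_j = \prod_{F \ni j} x_F$ is a product of the two variables corresponding to the two edges meeting at $j$ (together with a private variable if $j$ is the one closed vertex). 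Because the cycle is separated, consecutive generators share exactly one variable, and $m_j, m_k$ are coprime whenever $j,k$ are non-adjacent.

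Next I would compute the target value $\pd(\H)$ from Theorem \ref{LMpd}. In this case $s(\H) = 1$ and $n_1(\H) = \mu - 1$, so
\begin{displaymath}
  b(\H) = 1 + \left\lfloor \frac{\mu - 2}{3} \right\rfloor,
\end{displaymath}
and I would check that no $2$-special configuration can occur (a single string of opens around a cycle does not meet the $n_1 \equiv n_s \equiv 1$, $s \geq 2$ requirements), so $M(\H) = 0$ and hence $\pd(\H) = \mu - 1 - \lfloor (\mu-2)/3\rfloor$. Since $\pd(\H) \leq \ara(\H) \leq \triara(\H) \leq \biara(\H)$ always holds, it suffices to produce $\pd(\H)$ binomials (or monomials) that generate $I(\H)$ up to radical; this gives the matching upper bound $\biara(\H) \leq \pd(\H)$ and forces all four quantities to coincide.

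The heart of the argument is constructing the Schmitt--Vogel partition $P_0, P_1, \ldots, P_u$ of the generating set $P = G(I(\H))$ with $u = \pd(\H) - 1$. The idea is to group the cyclically-ordered generators into consecutive blocks of size three: each $P_\ell$ for $\ell \geq 1$ will consist of (at most) three consecutive $m_j$'s, with $P_0$ a single generator to seed condition (SV2). The binomial $q_\ell = \sum_{p \in P_\ell} p$ then has at most three terms, but the crucial point is verifying condition (SV3): for any two distinct $p, p''$ in a common block $P_\ell$, their product $pp''$ must lie in $(p')$ for some $p'$ in an earlier block. When $p$ and $p''$ are the two outer generators of a consecutive triple (say $m_{j}$ and $m_{j+2}$), they are non-adjacent, so $p p''$ is divisible by all the variables in the middle generator $m_{j+1}$, and I would arrange the indexing so that such a witness $p'$ always appears in $P_{\ell'}$ with $\ell' < \ell$. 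The main obstacle — and the genuinely cycle-specific difficulty — is the wrap-around at the junction $\{\mu, 1\}$: because the cycle closes up, the counting of blocks need not divide evenly, and one must be careful that the first and last blocks interlock correctly so that (SV1) and (SV3) hold across the seam, possibly by allowing a block of size three whose binomial must be a genuine trinomial (this is exactly why the cycle case yields $\triara$ rather than $\biara$, whereas the string case in Theorem \ref{strings} only needed binomials). I would treat the residues $\mu \equiv 0, 1, 2 \pmod 3$ as separate cases, checking in each that the number of blocks produced equals $\mu - 1 - \lfloor(\mu-2)/3\rfloor = \pd(\H)$ and that the divisibility witnesses are consistently available; the lone closed vertex, when present, only adds a private variable that does not interfere with these coprimality relations.
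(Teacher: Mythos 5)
There is a genuine gap in the core construction. Your plan is to partition the cyclically ordered generators into \emph{consecutive} blocks of up to three generators and feed this to Lemma \ref{SVLemma}. But two adjacent generators $m_j, m_{j+1}$ of $I(\H)$ share exactly one variable, and their product $m_j m_{j+1}$ is divisible by no generator other than $m_j$ and $m_{j+1}$ themselves; hence condition (SV3) can never be satisfied for two adjacent generators lying in a common block $P_\ell$ with $\ell>0$, since the required witness $p'$ would have to sit in a strictly earlier block. The partition that actually works (and is the one the paper uses, following Barile--Kiani--Mohammadi--Yassemi) is the opposite of yours: each non-singleton block consists of two \emph{non-adjacent} generators $m_j, m_{j+2}$, and the intermediate generator $m_{j+1}$, which divides $m_j m_{j+2}$, is placed alone in an \emph{earlier} block. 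A second problem is your remark that the wrap-around block may force ``a genuine trinomial'' and that ``this is exactly why the cycle case yields $\triara$ rather than $\biara$'': the statement you are proving asserts $\biara(\H)=\pd(\H)$, so a construction ending in a trinomial would not prove it. (Trinomials enter the paper only later, for alternating cycles in Lemma \ref{cycle-co}; here binomials and monomials suffice and must be produced.)

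Even after fixing the partition, your claim that a routine case check over $\mu \bmod 3$ closes the argument is too optimistic. For $\mu \equiv 0,1 \pmod 3$ the paired partition does satisfy (SV1)--(SV3) and Lemma \ref{SVLemma} applies. But for $\mu \equiv 2 \pmod 3$ the count of blocks forced by $\pd(\H)=2m+1$ makes every natural choice violate (SV3) at the seam: the paper's chosen elements $q_{2i}=x_{3i}x_{3i+1}+x_{3i+2}x_{3i+3}$ and $q_{2i+1}=x_{3i+2}x_{3i+3}+x_{3i+4}x_{3i+5}$ even share a common term, and the authors abandon Schmitt--Vogel entirely in this case, proving $I(\H)\subset\sqrt{J}$ by a direct two-stage induction (first showing $x_1 I(\H)\subset\sqrt{J}$, then removing the factor $x_1$). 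Your proposal contains no substitute for this computation, so the case $\mu\equiv 2\pmod 3$ is an unproved gap. The structural setup, the computation $\pd(\H)=\mu-1-\lfloor(\mu-2)/3\rfloor$ with $M(\H)=0$, and the reduction to exhibiting $\pd(\H)$ binomials are all correct.
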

If $\H$ does not contain any closed vertex, 
then $I(\H)$ is also the edge ideal of a cycle. 
In \cite[Propositions 2.2, 2.3 and 2.4]{BKMY}, Barile et al.\  constructed 
binomials and monomials which generate this ideal up to radical. 
Below we show that  the same construction with minor modifications works 
also for $\H$ which contains precisely one closed vertex.
\begin{proof}[Proof of Lemma \ref{cycle1closed}]
  Let $\H$ be a $\mu$-cycle. 
  By assumption, we may assume that the monomial generators of $I(\H)$ 
  are following forms: 
  \begin{displaymath}
    y x_1 x_\mu, x_1 x_2, x_2 x_3, \ldots, x_{\mu - 1} x_{\mu}, 
  \end{displaymath}
  where $x_1, x_2, \ldots, x_{\mu}$ are pairwise distinct variables 
  and $y$ is either a variable which is different from 
  $x_1, x_2, \ldots, x_{\mu}$ or $y=1$.   By Theorem \ref{LMpd}, we have 
  \begin{displaymath}
    \pd (\H) 
    = \mu - \left( 1 + \left\lfloor \frac{\mu - 2}{3} \right\rfloor \right). 
  \end{displaymath}
  We distinguish three cases. 

  \par
  \textit{Case 1}: $\mu = 3m$ ($m \geq 1$). 

  \par
  In this case, $\pd (\H) = 2m$. 
  Consider the following $2m$ elements: 
  \begin{displaymath}
    \begin{aligned}
      &\left\{ 
      \begin{aligned}
        q_{0} &= x_{1} x_{2}, \\
        q_{1} &= y x_{1} x_{\mu} + x_{2} x_{3}, 
      \end{aligned}
      \right. \\
      &\left\{ 
      \begin{aligned}
        q_{2i} &= x_{3i+1} x_{3i+2}, \\
        q_{2i+1} &= x_{3i} x_{3i+1} + x_{3i+2} x_{3i+3}, 
      \end{aligned}
      \right. 
      \qquad i=1, 2, \ldots, m-1. 
    \end{aligned}
  \end{displaymath}
  Lemma \ref{SVLemma} 
    (see also \cite[Proposition 2.2]{BKMY}) yields that $q_0, q_1, \ldots, q_{2m-1}$ 
  generate $I(\H)$ up to radical.

  \par
  \textit{Case 2}: $\mu = 3m+1$ ($m \geq 1$). 

  \par
  In this case, $\pd (\H) = 2m+1$. 
  Consider the following $2m$ elements: 
  \begin{displaymath}
    \left\{ 
    \begin{aligned}
      q_{2i} &= x_{3i+2} x_{3i+3}, \\
      q_{2i+1} &= x_{3i+1} x_{3i+2} + x_{3i+3} x_{3i+4}, 
    \end{aligned}
    \right. 
    \qquad i=0, 1, 2, \ldots, m-1. 
  \end{displaymath}
  Set $q_{2m} = y x_1 x_{3m+1}$. 

  \par
Lemma \ref{SVLemma} 
  (see also \cite[Proposition 2.3]{BKMY}) now yields that $q_0, q_1, \ldots, q_{2m}$ 
  generate $I(\H)$ up to radical.

  \par
  \textit{Case 3}: $\mu = 3m+2$ ($m \geq 1$). 

  \par
  In this case, $\pd (\H) = 2m+1$. 
  Consider the following $2m$ elements: 
  \begin{displaymath}
    \begin{aligned}
      &\left\{ 
      \begin{aligned}
        q_{0} &= x_{1} x_{2}, \\
        q_{1} &= x_{2} x_{3} + x_{4} x_{5}, 
      \end{aligned}
      \right. \\
      &\left\{ 
      \begin{aligned}
        q_{2i} &= x_{3i} x_{3i+1} + x_{3i+2} x_{3i+3}, \\
        q_{2i+1} &= x_{3i+2} x_{3i+3} + x_{3i+4} x_{3i+5}, 
      \end{aligned}
      \right. 
      \qquad i=1, 2, \ldots, m-1. 
    \end{aligned}
  \end{displaymath}
  Set $q_{2m} = y x_1 x_{3m+2} + x_{3m} x_{3m+1}$ 
  (see also \cite[Proposition 2.4]{BKMY}).

  \par
  Set $J = (q_0, q_1, \ldots, q_{2m})$. 
  We claim $\sqrt{J} = I(\H)$. 
  It is clear that $J \subset I(\H)$. Thus we prove $\sqrt{J} \supset I(\H)$. 

  \par
  We first prove $x_1 I(\H) \subset \sqrt{J}$. 
  Since one has $q_0, q_1 \in J$, then
  $x_1 \cdot x_1 x_2, x_1 x_2 x_3, x_1 x_4 x_5 \in \sqrt{J}$. 
  We claim that 
  \begin{equation}
    \label{edgecycle1}
    x_1 x_{3i} x_{3i+1}, x_1 x_{3i+2} x_{3i+3}, x_1 x_{3i+4} x_{3i+5} 
    \in \sqrt{J}, \qquad i=1, 2, \ldots, m-1. 
  \end{equation}
  We prove this by induction on $i$. 

  \par
  For the case $i=1$, we need to prove that
  $x_1 x_3 x_4, x_1 x_5 x_6, x_1 x_7 x_8 \in \sqrt{J}$. 
  Since $x_1 q_2 = x_1 x_3 x_4 + x_1 x_5 x_6 \in J$ 
  and $x_1 x_4 x_5 \in \sqrt{J}$, Lemma \ref{SVLemma} 
yields $x_1 x_3 x_4, x_1 x_5 x_6 \in \sqrt{J}$. 
  Then, since $x_1 q_3 = x_1 x_5 x_6 + x_1 x_7 x_8 \in J$ 
  and $x_1 x_5 x_6 \in \sqrt{J}$, 
we also have $x_1 x_7 x_8 \in \sqrt{J}$. 

  \par
  Assume that (\ref{edgecycle1}) is true for $i-1$. Then 
  since $x_1 q_{2i} = x_1 x_{3i} x_{3i+1} + x_1 x_{3i+2} x_{3i+3} \in J$ 
  and $x_1 x_{3i+1} x_{3i+2} = x_1 x_{3(i-1)+4} x_{3(i-1)+5} \in \sqrt{J}$, Lemma \ref{SVLemma} yields
 $x_1 x_{3i} x_{3i+1}, x_1 x_{3i+2} x_{3i+3} \in \sqrt{J}$ 
  . 
  Then $x_1 q_{2i+1} = x_1 x_{3i+2} x_{3i+3} + x_1 x_{3i+4} x_{3i+5} \in J$ 
  and $x_1 x_{3i+2} x_{3i+3} \in \sqrt{J}$, hence we have 
 $x_1 x_{3i+4} x_{3i+5} \in \sqrt{J}$, as required. 

  \par
  Therefore (\ref{edgecycle1}) holds true for all $i$. 
  Moreover, $q_{2m} = y x_1 x_{3m+2} + x_3 x_{3m+1} \in J$ and 
  $x_1 x_{3m+1} x_{3m+2} = x_1 x_{3(m-1)+4} x_{3(m-1)+5} \in \sqrt{J}$. 
  These two facts imply
  $x_1 \cdot y x_1 x_{3m+2}, x_1 x_{3m} x_{3m+1} \in \sqrt{J}$. 

  \par
  Hence we have $x_1 I(\H) \subset \sqrt{J}$. 

  \par
  Next we prove $I(\H) \subset \sqrt{J}$. 
Since $x_1 I(\H) \subset \sqrt{J}$, we have $y x_1^2 x_{3m+2} \in \sqrt{J}$, whence $yx_1x_{3m+2}\in \sqrt{J}$. Since
$q_{2m} \in J$, we also have $x_{3m} x_{3m+1} \in \sqrt{J}$. 
  We now prove 
  \begin{equation}
    \label{edgecycle2}
    x_{3i} x_{3i+1},\; x_{3i+2} x_{3i+3},\; x_{3i+4} x_{3i+5} 
    \in \sqrt{J}, \qquad i=1, 2, \ldots, m-1 
  \end{equation}
  by descending induction on $i$. 

  \par
  When $i=m-1$, since $x_{3m} x_{3m+1} \in \sqrt{J}$ and 
  $q_{2(m-1)+1} = x_{3m-1} x_{3m} + x_{3m+1} x_{3m+2} \in J$, 
Lemma \ref{SVLemma} gives
  $x_{3m-1} x_{3m}, x_{3m+1} x_{3m+2} \in \sqrt{J}$. 
  Also, since $q_{2(m-1)} = x_{3m-3} x_{3m-2} + x_{3m-1}x_{3m} \in J$, 
  we have $x_{3m-3}x_{3m-2} \in \sqrt{J}$. 

  \par
  Next, assume that (\ref{edgecycle2}) holds true for $i+1$. Since $q_{2i+1} = x_{3i+2} x_{3i+3} + x_{3i+4} x_{3i+5} \in J$ 
  and $x_{3i+3} x_{3i+4} = x_{3(i+1)} x_{3(i+1)+1} \in \sqrt{J}$, 
  then Lemma \ref{SVLemma} yields $x_{3i+2} x_{3i+3}, x_{3i+4} x_{3i+5} \in \sqrt{J}$.
  Then $q_{2i} = x_{3i} x_{3i+1} + x_{3i+2} x_{3i+3} \in J$, 
 and so we have $x_{3i} x_{3i+1} \in \sqrt{J}$, as required. 

  \par
  Note that $x_1 x_2 = q_0 \in J$. 
  Also, since $q_1 = x_2 x_3 + x_4 x_5$ and $x_3 x_4 \in \sqrt{J}$, then 
  $x_4 x_5 \in \sqrt{J}$. This completes the proof. 
\end{proof}

\par
Next, we consider the case where the number of vertices is at most $4$. 
In this case, we know $\ara (\H) = \pd (\H)$ by \cite{KTY}. We prove the following slightly more precise lemma.
\begin{Lemma}
  \label{4cycle}
  Let $\H$ be a cycle hypergraph on $[\mu]$ with $\mu \leq 4$, 
  then $\triara (\H) = \biara (\H) = \pd (\H)$.
\end{Lemma}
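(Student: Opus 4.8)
The plan is to prove Lemma \ref{4cycle} by exhausting the finitely many cases that arise when $\mu \leq 4$. Since $\H$ is a cycle hypergraph, we have $3 \leq \mu \leq 4$, and each case is determined by how many of the vertices are closed. By Lemma \ref{cycle1closed} we may assume $\H$ contains at least two closed vertices, so only a handful of configurations remain. First I would write down, for each remaining configuration, the explicit ideal $I(\H)$ in $R(\H)$ using the recipe from Section \ref{sec:hypergraph}: to each face one assigns a distinct variable, so each edge $\{j,j+1\}$ contributes an edge-variable and each closed vertex $\{j\}$ contributes its own variable, and $m_j$ is the product of the variables of the faces containing $j$. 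The point of using $I(\H)$ rather than an arbitrary $I$ is that Remark \ref{bi-tri-ara} guarantees the values of $\triara$, $\biara$, and $\pd$ depend only on $\H$.

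Next I would compute $\pd(\H)$ for each configuration directly from the formula in Theorem \ref{LMpd}, $\pd(\H) = \mu(\H) - b(\H) + M(\H)$; for these tiny cases $s(\H)$, the $n_i(\H)$, $b(\H)$, and $M(\H)$ are all immediate, and the small cycles contain no $2$-special configuration (which needs $s \geq 2$ together with the specified residue pattern), so typically $M(\H)=0$. This fixes the target number $u = \pd(\H)$ that must be matched. Since $\pd(\H) \leq \triara(\H) \leq \biara(\H)$ always holds, it suffices to produce $u$ binomials or monomials that generate $I(\H)$ up to radical, which would then force equality throughout. The construction of these generators is where Lemma \ref{SVLemma} of Schmitt and Vogel enters: for each configuration I would partition the generating set $G(I(\H))$ into subsets $P_0, \ldots, P_{u-1}$ obeying (SV1)--(SV3) and take $q_\ell = \sum_{p \in P_\ell} p$. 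Verifying (SV3), namely that for distinct $p, p'' \in P_\ell$ the product $pp''$ lies in $(p')$ for some $p'$ in an earlier subset, is the routine but essential check; because the $\mu=4$ cycles are so small the partitions have at most two elements per block and the divisibility is transparent.

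The main obstacle, such as it is, is not difficulty but organization: one must make sure the case analysis is genuinely complete once the hypotheses of Lemma \ref{cycle1closed} have been removed, and that the partitions chosen actually hit the bound $\pd(\H)$ rather than merely $\mu(\H)$. Concretely, for the $4$-cycle the saturated case gives $\pd(\H)=4$ with nothing to prove, while configurations with two or three closed vertices require combining two edge-generators into a single binomial so that $u$ drops to exactly $\pd(\H)$; the delicate point is confirming that the resulting binomials still generate the ideal up to radical, which is precisely what the Schmitt--Vogel conditions certify. I would present the verification configuration by configuration, and because every such binomial lies in $R(\H)$ with the combinatorially prescribed support, all three invariants collapse to the common value $\pd(\H)$, giving $\triara(\H) = \biara(\H) = \pd(\H)$ as claimed.
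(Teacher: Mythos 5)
Your proposal is correct and follows essentially the same route as the paper: reduce to the non-saturated cases (where $\pd(\H)=\mu-1$), then apply the Schmitt--Vogel lemma to an explicit partition in which the two ``opposite'' generators are summed into a single binomial, e.g.\ $x_1x_2,\ y_1x_1x_4+y_2x_2x_3,\ y_3x_3x_4$ for $\mu=4$. The paper merely streamlines your configuration-by-configuration enumeration by writing the generators uniformly with each $y_i$ allowed to equal $1$, so that one partition covers all placements of the open vertices at once.
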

\begin{proof}

  We first assume that $\pd (\H) = \mu$. In this case, 
  we can choose $\mu$ monomial generators. 
  Next we assume that $\pd (\H) < \mu$. 
  In this case, we can easily check that $\pd (\H) = \mu - 1$. 
  
  \par
  When $\mu = 3$, then the $3$ generators of $I(\H)$ can be written as 
  $x_1 x_2, y_1 x_1 x_3, y_2 x_2 x_3$, where each $y_i$ can possibly be $1$.
  By Lemma \ref{SVLemma},   
  $x_1 x_2, y_1 x_1 x_3 + y_2 x_2 x_3$ generate $I(\H)$ up to radical.

  \par
  When $\mu = 4$, then the $4$ generators of $I(\H)$ can be written as 
  $x_1 x_2, y_1 x_1 x_4, y_2 x_2 x_3, y_3 x_3 x_4$, 
  where each $y_i$ is possibly $1$. 
  Lemma \ref{SVLemma} yields that the elements
  $x_1 x_2, y_1 x_1 x_4 + y_2 x_2 x_3, y_3 x_3 x_4$ generate 
  $I(\H)$ up to radical. 
%
\end{proof}

\par
Thus, we can assume that the number of vertices of a cycle hypergraph 
is at least $5$. 
\begin{Lemma}
  \label{cycle2ad-closed}
  Let $\H$ be a cycle hypergraph on $[\mu]$ with $\mu \geq 5$. 
  If $\H$ contains two adjacent closed vertices, then 
  $\triara (\H) = \biara (\H) = \pd (\H)$.
\end{Lemma}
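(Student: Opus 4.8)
The plan is to reduce the case of a cycle hypergraph $\H$ with two adjacent closed vertices to the string case already settled in Theorem \ref{strings}. First I would use the freedom in labeling the vertices of the cycle to assume that the two adjacent closed vertices are the vertices $1$ and $\mu$; since $\{\mu,1\}\in\H$ is the distinguished edge closing the cycle, and both its endpoints are closed, I can now ``cut'' the cycle at this edge. Concretely, the generators of $I(\H)$ take the form $m_1,\ldots,m_\mu$ where the edge $\{\mu,1\}$ contributes a variable dividing exactly $m_1$ and $m_\mu$; because vertices $1$ and $\mu$ are closed, the monomials $m_1$ and $m_\mu$ each carry an extra ``private'' variable as well.

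The key step is to produce a string hypergraph $\widetilde{\H}$ on $[\mu]$ with $I(\widetilde\H)$ differing from $I(\H)$ only in a controlled way, and to relate the two arithmetical ranks. The natural candidate is $\widetilde\H=\H\setminus\{\{\mu,1\}\}$, which is a string hypergraph by Definition \ref{defn:cycle}. I would then argue on two fronts. On the combinatorial side, I expect $\pd(\H)=\pd(\widetilde\H)$: removing the closing edge when its endpoints are already closed should not change $b$ or $M$, since the two closed endpoints prevent that edge from participating in any $2$-special configuration; this can be checked directly from the formula in Theorem \ref{LMpd}, or via the inductive Lemma \ref{LMpd-inductive} applied at the closed vertex $1$, whose neighbors are then handled by the ``all neighbors closed'' clause. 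On the generator side, I would apply Schmitt--Vogel (Lemma \ref{SVLemma}). By Theorem \ref{strings} there are $\pd(\widetilde\H)$ binomials or monomials generating $I(\widetilde\H)$ up to radical; I would then incorporate the single extra generator coming from the closing edge $\{\mu,1\}$ by folding it into one of these sums, so that the total count remains $\pd(\widetilde\H)=\pd(\H)$ and only a trinomial is introduced. Combined with the universal lower bound $\pd(\H)\le\ara(\H)\le\triara(\H)\le\biara(\H)$, this would force equality.

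More precisely, the mechanism I have in mind is to list the elements $q_0,\ldots,q_{u-1}$ (with $u=\pd(\widetilde\H)$) produced for the string, identify which $q_\ell$ already contains the term attached to the vertex $1$ (respectively $\mu$), and add the closing monomial $yx_1x_\mu$ as a new summand to one such $q_\ell$, turning a binomial into a trinomial. I would then verify the Schmitt--Vogel conditions (SV1)--(SV3) for the enlarged family: condition (SV1) holds because every generator of $I(\H)$, including $yx_1x_\mu$, now appears in some $P_\ell$; condition (SV2) is unaffected; and (SV3) requires checking that any product of two terms lying in the same $P_\ell$ lands in an ideal generated by a term of an earlier $P_{\ell'}$. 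The products among the old string terms are already controlled, so the only genuinely new verifications involve the cross products of $yx_1x_\mu$ with the other term(s) in its block, which reduce to the fact that $x_1$ and $x_\mu$ divide the neighboring edge monomials $m_2$ and $m_{\mu-1}$.

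The main obstacle I anticipate is the bookkeeping in this last step: ensuring that the closing term can be absorbed into an \emph{existing} Schmitt--Vogel block without violating (SV3), rather than forcing a brand-new block that would raise the count above $\pd(\H)$. This is delicate because the string construction of Theorem \ref{strings} is itself inductive and does not hand us an explicit global list of the $q_\ell$; I would likely need either to unwind that induction to exhibit a concrete block containing the vertex-$1$ term in a compatible position, or, alternatively, to avoid this issue entirely by reducing instead to a \emph{smaller} cycle or string via the inductive lemmas (Lemmas \ref{LMpd-inductive} and \ref{LMpd-string-inductive-3}) applied at one of the two adjacent closed vertices, mirroring the strategy of Lemmas \ref{+1} and \ref{+2}. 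The cleanest route is probably the latter: peel off the closed vertex $1$ using Lemma \ref{LMpd-inductive}, observe that $\H_1$ is a string hypergraph to which Theorem \ref{strings} applies, and conclude $\triara(\H)\le\triara(\H_1)+1=\pd(\H_1)+1=\pd(\H)$.
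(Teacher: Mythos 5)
Your ``cleanest route'' --- peel off the closed vertex $1$ and conclude $\triara(\H)\le\triara(\H_1)+1=\pd(\H_1)+1=\pd(\H)$ --- has a genuine gap: the equality $\pd(\H)=\pd(\H_1)+1$ requires \emph{all} neighbors of $1$ to be closed (the ``Moreover'' clause of Lemma \ref{LMpd-inductive}), and while the neighbor $\mu$ is closed by hypothesis, the neighbor $2$ need not be. When $2$ is open the equality can fail outright. For instance, take the $6$-cycle with $1$ and $6$ closed and $2,3,4,5$ open: here $s(\H)=1$, $n_1=4$, $b(\H)=2$, $M(\H)=0$, so $\pd(\H)=4$; but $\H_1$ is the string closed--open--open--open--closed on five vertices, with $b(\H_1)=1$, so $\pd(\H_1)=4$ and $\pd(\H_1)+1=5>\pd(\H)$. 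So your bound $\triara(\H)\le\triara(\H_1)+1$ only yields $5$, one more than needed. The paper avoids this by splitting into two cases: if $2$ is closed, it does exactly what you propose (and then Lemma \ref{LMpd-inductive} does apply); if $2$ is open, it instead shows $\pd(\H)=\pd(\H_3)+2$ (combining $\pd(\H)=\max\{\pd(\H_1),\pd(\Q_1)+1\}$ with $\pd(\Q_1)=\pd(\H_3)+1$ and the string bound $\pd(\H_1)\le\pd(\H_3)+2$ of Lemma \ref{LMpd-string-inductive-2}), and covers the three generators $m_1,m_2,m_3$ with the two elements $x_1x_2$ and $y_1x_1x_\mu+y_3x_2x_3$, appending them to a presentation of $I(\H_3)$. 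Your own Lemma \ref{LMpd-string-inductive-3} suggestion points in this direction but you never carry it out.

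Your first strategy (cutting the cycle at $\{\mu,1\}$) is also off target as stated: removing that face does not remove a generator, it removes a common variable from $m_1$ and $m_\mu$, so $I(\H)$ is not $I(\widetilde\H)$ plus one extra monomial $yx_1x_\mu$ to be ``folded into'' an existing Schmitt--Vogel block. (Your computation that $\pd(\H)=\pd(\widetilde\H)$ is correct, since no $2$-special configuration can cross the adjacent closed pair $\mu,1$, but the generator-side argument built on it does not get started.) As you yourself anticipate, the string construction is inductive and gives no explicit block to absorb anything into; the case split above is what actually closes the argument.
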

\begin{proof}
Without loss of generality we may assume $1$ and $\mu$ are two adjacent closed vertices.

  \par
  We first assume that the vertex $2$ is also closed. 
  Then we have 
  $\pd (\H) = \pd (\mathcal{H}_1) + 1$, by Lemma \ref{LMpd-inductive}. 
  Since ${\mathcal{H}}_1$ is a string hypergraph, we have 
  $\biara (\mathcal{H}_1) = \pd (\mathcal{H}_1)$, by Theorem \ref{strings}. 
  Now, the equality $\biara (\H) = \pd (\H)$ follows because the monomial $m_1$ corresponding to the vertex $1$, 
  together with  elements 
  which generate $I (\mathcal{H}_1)$ up to radical, 
  generate $I (\H)$ up to radical (i.e. if $\sqrt{I (\mathcal{H}_1)}=\sqrt{(a_1,\ldots,a_r)}$, then $\sqrt{I (\H)}=\sqrt{(m_1,a_1,\ldots,a_r)}$). 

  \par
  We may then assume that the vertex $2$ is open. 
  Then the  monomials corresponding to the vertices $1, 2, 3$ can be written as 
  $y_1 x_1 x_{\mu}, x_1 x_2, y_3 x_2 x_3$, respectively, 
  where $y_3$ is possibly $1$. 
  Note that $\mathcal{Q}_1$ is the disjoint union of $\H_3$ 
  and a closed vertex. Thus, 
  $\pd (\mathcal{Q}_1) = \pd (\mathcal{H}_3) + 1$. 
  By Lemma \ref{LMpd-inductive}, we have 
  \begin{displaymath}
    \pd (\H) = \max \{ \pd (\mathcal{H}_1), \pd (\mathcal{Q}_1) + 1 \} 
             = \max \{ \pd (\mathcal{H}_1), \pd (\mathcal{H}_3) + 2 \}. 
  \end{displaymath}
  Since $\H_1$ is a string hypergraph, 
  we have $\pd (\H_1) \leq \pd (\H_3) + 2$ 
  by Lemma \ref{LMpd-string-inductive-2}, 
  and thus $\pd (\H) = \pd (\mathcal{H}_3) + 2$. 
  Also, since $\mathcal{H}_3$ is a string hypergraph, Theorem \ref{strings} shows that 
 $\biara (\mathcal{H}_3) = \pd (\mathcal{H}_3)$. 
  
 Since the elements $x_1 x_2, y_1 x_1 x_{\mu} + y_3 x_2 x_3$, together with 
  elements which generate $I_3$ up to radical, 
  generate $I (\H)$ up to radical, 
  we obtain $\biara (\H) = \pd (\H)$. 
\end{proof}

In order to prove the following lemma, we use Theorem \ref{LMpd}.  
\begin{Lemma}
  \label{cycle3ad-open-0mod3}
  Let $\H$ be a cycle hypergraph. 
  Suppose that there is a string of opens with $n_0$ open vertices,
  with $n_0 \equiv 0 \mod 3$ in $\H$. 
  Then $\triara (\H) = \biara (\H) = \pd (\H)$.
\end{Lemma}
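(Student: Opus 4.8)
The plan is to reduce the computation for the cycle $\H$ to that of two \emph{string} hypergraphs, for which the binomial arithmetical rank and the projective dimension already agree by Theorem \ref{strings}. Since $\pd(\H) \le \ara(\H) \le \triara(\H) \le \biara(\H)$ always holds, it suffices to establish the single inequality $\biara(\H) \le \pd(\H)$; this will force all four quantities to coincide. I would first dispose of the degenerate situation: if $\H$ has at most one closed vertex, then by the standing convention for cycles $n_0 = \mu(\H)-1 \equiv 0 \bmod 3$, i.e. $\mu(\H) \equiv 1 \bmod 3$, which is exactly Case~2 of Lemma \ref{cycle1closed}. So from now on I may assume $\H$ has at least two closed vertices, and hence the string of opens $S$ with $n_0 \equiv 0 \bmod 3$ is genuinely bounded by two distinct closed vertices.

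Writing $n_0 = 3k$, after rotating the labels of the cycle I would arrange that $S$ occupies the vertices $2,3,\ldots,3k+1$, that the two bounding closed vertices are $1$ and $3k+2$, and that the monomial generators indexed by $S$ read $m_{j} = x_{j-1}x_{j}$ for $j=2,\ldots,3k+1$, where $x_1,\ldots,x_{3k+1}$ are the edge variables along $S$ and where $m_1$, $m_{3k+2}$ additionally carry the remaining edge variables of the cycle. The central step is then the decomposition
\[ I(\H) = I'' + I_S, \qquad I_S := (m_2,\ldots,m_{3k+1}), \quad I'' := (m_1, m_{3k+2}, m_{3k+3}, \ldots, m_\mu). \]
Deleting the open vertices of $S$ breaks the cycle into a path, so $\H(I'')$ is a string hypergraph whose two endpoints are the closed vertices $1$ and $3k+2$, and whose strings of opens are exactly those of $\H$ other than $S$. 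On the other hand $I_S$ is the edge ideal of a path, so $\H(I_S)$ is a single string of opens on $3k$ vertices. Applying Theorem \ref{strings} to each factor, together with the subadditivity $\biara(I''+I_S) \le \biara(I'') + \biara(I_S)$ already used in the proof of Lemma \ref{+2}, I obtain
\[ \biara(\H) \le \pd\big(\H(I'')\big) + \pd(I_S). \]

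It then remains to evaluate both projective dimensions via Theorem \ref{LMpd} and to verify that they add up to $\pd(\H)$. A direct count gives $\pd(I_S) = 2k$, since $\H(I_S)$ has a single string of opens with $n_1 = 3k-2$, so $b = 1 + \lfloor (3k-3)/3 \rfloor = k$ and modularity $0$. For $\H(I'')$ one has $\mu(\H(I'')) = \mu(\H) - 3k$ and $b(\H(I'')) = b(\H) - k$, because $S$ contributes $1 + \lfloor (3k-1)/3\rfloor = k$ to $b(\H)$. The decisive point, which I expect to be the main obstacle, is the modularity invariance $M(\H(I'')) = M(\H)$: because $n_0 = 3k \equiv 0 \bmod 3$, the string of opens $S$ can never appear as one of the $n_i$ of a $2$-special configuration, so no $2$-special configuration of $\H$ uses $S$; consequently cutting the cycle open at $S$ neither destroys an existing nor creates a new $2$-special configuration, and maximal families of pairwise disjoint ones correspond bijectively. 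Making this combinatorial comparison precise (in particular ruling out a configuration that wraps around the cycle through $S$) is the delicate part of the argument. Granting it, Theorem \ref{LMpd} yields
\[ \pd\big(\H(I'')\big) = (\mu(\H) - 3k) - (b(\H) - k) + M(\H) = \pd(\H) - 2k, \]
so that $\biara(\H) \le (\pd(\H) - 2k) + 2k = \pd(\H)$. Combined with $\pd(\H) \le \triara(\H) \le \biara(\H)$, this gives $\triara(\H) = \biara(\H) = \pd(\H)$, as claimed.
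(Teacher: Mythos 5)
Your argument is correct, and it reaches the same two pillars as the paper's proof --- Theorem \ref{strings} for the residual string and Theorem \ref{LMpd} for the bookkeeping --- but via a different decomposition. The paper deletes only the \emph{first three} open vertices $u_1,u_2,u_3$ of the distinguished string of opens, observes that the corresponding three edge-monomials are generated up to radical by the two elements $u_2$ and $u_1+u_3$ (Lemma \ref{SVLemma}), and shows $\pd(\H)=\pd(\H'')+2$ for the resulting single string hypergraph $\H''$; the price is a two-case analysis according to whether the vertex $v'$ following $u_3$ is closed or open, since in the latter case a residual string of opens with $n_0-4\equiv 2 \bmod 3$ survives. You instead delete \emph{all} $3k=n_0$ open vertices at once, splitting $I(\H)$ into two string pieces, which removes the case split at the cost of the extra (easy) computation $\pd(I_S)=2k$ and the subadditivity of $\biara$; your counts $\mu$, $s$, $b$ all check out, and your modularity claim $M(\H(I''))=M(\H)$ rests on exactly the same observation the paper uses, namely that a string of opens with $n_0\equiv 0\bmod 3$ can never serve as an $n_i$ of a $2$-special configuration (so in particular no configuration can wrap around the cycle through $S$), which you correctly identify as the point requiring care. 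Both routes yield $\biara(\H)\le\pd(\H)$ and hence the full chain of equalities; yours is marginally cleaner in the combinatorics, the paper's is more local and parallels the inductive flavor of the other lemmas in Section \ref{sec:cycle}.
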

\begin{proof}
  By Lemma \ref{cycle1closed}, 
  we may assume that $\H$ contains at least $2$ closed vertices. 
  Let $\s_0$ be the string of opens with $n_0$ open vertices,
  and let $u_1, u_2, u_3$ be three adjacent open vertices in $\s_0$ 
  such that $u_1$ is adjacent to a 
  closed vertex $v$. 
  Let $v'$ be the other neighbor of $u_3$. 
  We consider the ideal $I''$ with 
  $G(I'') = G(I) \setminus \{ u_1, u_2, u_3 \}$. 
  Then, $\H'' := \H (I'')$ is a string hypergraph 
  whose endpoints are $v$ and $v'$ 
  (i.e., $\H''$ is obtained by {\it deletion} 
   of the vertices $u_1,u_2$ and $u_3$ from $\H$ 
   and changing $v'$ to be closed if $v'$ is open in $\H$). 
  We claim that $\pd (\H) = \pd (\H'') + 2$. 
  Then, since we know that $\biara(\H'')=\ara (\H'') = \pd (\H'')$, 
  we can conclude that $\biara(\H)=\ara (\H) = \pd (\H)$, 
  because $\ara (\H'')$ elements which generate $I''$ up to radical, 
  together with $u_2$ and $u_1 + u_3$, generate $I$ up to radical. 

  \par
Hence, we only need to prove the equality $\pd (\H) = \pd (\H'') + 2$. 
  We first note that $\mu (\H'') = \mu (\H) - 3$ and that $v'$ is a closed vertex in $\H''$ (independently of 
  whether it is closed or not in $\H$). 
  
  If $v'$ is closed in $\H$, then $s(\H'') = s(\H) - 1$. 
  Since $\lfloor (n_0 - 1)/3 \rfloor = 0$, we have $b(\H'') = b(\H) - 1$. 
  Moreover, $M(\H'') = M(\H)$, because $\s_0$ 
  does not belong to any $2$-special configuration in $\H$. 
  Therefore, we have $\pd (\H) = \pd (\H'') + 2$, by Theorem \ref{LMpd}. 
  
  If $v'$ is open in $\H$, then $s(\H'') = s(\H)$. 
  Let $n_0''$ be the number of open vertices in the string of opens $\H''$, one of whose endpoints is $v'$. Then, $n_0'' = n_0 -  4  \equiv 2 \mod 3$. 
  Note that $\lfloor (n_0 - 1)/3 \rfloor = n_0 / 3 -1$ 
  and $\lfloor (n_0'' - 1)/3 \rfloor = n_0/3 - 2$. 
  Thus, $b(\H'') = b(\H) - 1$. 
  Moreover, we have $M (\H'') = M (\H)$, because both strings of opens do 
  not belong to any $2$-special configuration. 
  Therefore,  by Theorem \ref{LMpd}, we have $\pd (\H) = \pd (\H'') + 2$. 
\end{proof}

By Lemma \ref{cycle3ad-open-0mod3}, we may then assume that 
each string of opens in $\H$ contains a number of open vertices that is either congruent to $2 \mod 3$ or $1 \mod 3$. 
\begin{Lemma}
  \label{cycle3ad-open}
  If we prove that $\ara (\H) = \triara (\H) = \pd (\H)$ 
  for a cycle hypergraph $\H$ 
  whose strings of opens all have at most $2$ open vertices, 
  then Theorem \ref{cycles} follows. 
\end{Lemma}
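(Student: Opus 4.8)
The plan is to prove this reduction by strong induction on the number $\mu$ of vertices of the cycle $\H$, assuming throughout that the conclusion holds for every cycle whose strings of opens all have at most two open vertices. By the reductions already carried out in Lemmas \ref{cycle1closed}, \ref{4cycle}, \ref{cycle2ad-closed} and \ref{cycle3ad-open-0mod3}, I may assume that $\H$ has at least two closed vertices, has no two adjacent closed vertices, has $\mu \geq 5$, and that every string of opens in $\H$ has a number of open vertices congruent to $1$ or $2$ modulo $3$. If every string of opens has at most two open vertices we are done by hypothesis; otherwise some string of opens $\s_0$ has $n_0$ open vertices with $n_0 \geq 4$, since $n_0 \geq 3$ together with $n_0 \not\equiv 0 \bmod 3$ forces $n_0 \geq 4$.

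First I would form a strictly smaller cycle $\H''$ by shortening $\s_0$ from $n_0$ to $n_0 - 3$ open vertices and leaving the rest of $\H$ unchanged. As $n_0 - 3 \equiv n_0 \bmod 3$ and $n_0 - 3 \geq 1$, the hypergraph $\H''$ is again a cycle satisfying all of the standing assumptions and having $\mu - 3$ vertices, so the inductive hypothesis yields $\ara(\H'') = \triara(\H'') = \pd(\H'')$.

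Next I would compare the two projective dimensions through Theorem \ref{LMpd}. One has $\mu(\H'') = \mu(\H) - 3$ and $s(\H'') = s(\H)$, while $\lfloor (n_0 - 1)/3 \rfloor$ drops by exactly $1$ and every other summand is unchanged, so $b(\H'') = b(\H) - 1$. Because the residue of $n_0$ modulo $3$ is preserved, the string $\s_0$ enters exactly the same $2$-special configurations before and after shortening, whence $M(\H'') = M(\H)$. Therefore $\pd(\H) = \mu(\H) - b(\H) + M(\H) = (\mu(\H'') + 3) - (b(\H'') + 1) + M(\H'') = \pd(\H'') + 2$. It then suffices to prove the single inequality $\triara(\H) \leq \triara(\H'') + 2$: combined with $\pd(\H) \leq \triara(\H)$ this forces $\triara(\H) \leq \pd(\H'') + 2 = \pd(\H) \leq \triara(\H)$, and the identical chain of inequalities proves the statement for $\ara$ as well.

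For that inequality I would choose three consecutive open vertices $u_1, u_2, u_3$ in $\s_0$, with neighbors $w$ and $w'$ on either side, and adjoin to a system of $\triara(\H'')$ radical generators of $I(\H'')$ the two elements $m_{u_2}$ and $m_{u_1} + m_{u_3}$, exactly as in the proof of Lemma \ref{cycle3ad-open-0mod3}; a short Schmitt-Vogel computation then recovers $m_{u_1}, m_{u_2}, m_{u_3}$ up to radical. The main obstacle is the bookkeeping that keeps us inside the class of \emph{cycles}: simply deleting the three generators $m_{u_1}, m_{u_2}, m_{u_3}$ would turn $w$ and $w'$ into closed endpoints and open the cycle into a string, which would change $M$ and destroy the count above. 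The delicate point is rather that the binomial $m_{u_1} + m_{u_3}$ serves as a \emph{bridge}: its two monomials carry precisely the edge variables meeting $w$ and $w'$, so in the combined system it plays the role of the single new edge joining $w$ to $w'$ in the shortened cycle $\H''$. Verifying conditions (SV1)--(SV3) for the resulting system — after normalizing the monomial supports by Proposition \ref{class-hypergraph} and Remark \ref{bi-tri-ara} — so as to conclude that the lifted radical generators of $I(\H'')$ together with $m_{u_2}$ and $m_{u_1} + m_{u_3}$ generate $I(\H)$ up to radical is where the real work lies.
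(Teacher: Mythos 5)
There is a genuine gap, and it sits exactly where you point to it yourself: the inequality $\triara (\H) \leq \triara (\H'') + 2$ is asserted but not proved, and it is not a routine Schmitt--Vogel verification. Your $\H''$ is obtained by deleting $u_1,u_2,u_3$ \emph{and closing the cycle back up}, i.e.\ adjoining a new edge $\{w,w'\}$ that has no counterpart in $\H$. Consequently $I(\H'')$ lives in a different polynomial ring: the monomials of $\H''$ at $w$ and $w'$ each contain the new variable $x_{\{w,w'\}}$ and are neither factors nor multiples of the corresponding monomials of $\H$. A system of radical generators of $I(\H'')$ therefore cannot simply be ``lifted'' to $R(\H)$ and supplemented by $m_{u_2}$ and $m_{u_1}+m_{u_3}$; whatever you substitute for $x_{\{w,w'\}}$, the specialized generators only put multiples such as $m_w\cdot(\text{extra variable})$ into the radical, not $m_w$ itself, and the extra variable cannot be stripped off. (Proposition \ref{class-hypergraph} does not help here, since $\H$ and $\H''$ are genuinely different hypergraphs.) This is precisely why the deletion argument of Lemma \ref{cycle3ad-open-0mod3} is only run when the result is a \emph{string} (already settled by Theorem \ref{strings}), and why the later cycle-shrinking steps (Lemma \ref{cycle-cooc} and Case 2 of Lemma \ref{cycle-co}) are engineered so that the merged vertex's monomial is an honest \emph{product} of two original generators, hence already lies in the old ideal.

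The paper's proof of this particular lemma avoids the problem entirely by never changing $\mu$: it turns a single well-chosen open vertex of a long string of opens into a closed vertex, checks via Theorem \ref{LMpd} that $\mu$, $b$ and $M$ are unchanged so $\pd(\H)=\pd(\H')$, and observes that $I(\H)$ is obtained from $I(\H')$ by setting the new closed-vertex variable equal to $1$ --- a specialization under which any system of (at most trinomial) radical generators of $I(\H')$ maps to one for $I(\H)$. Iterating reduces to strings of opens of length at most $2$ with no transport-of-generators issue at all. Your projective-dimension bookkeeping ($b$ drops by one, $M$ preserved, $\pd(\H)=\pd(\H'')+2$) is fine, but the arithmetical-rank half of the reduction, which is the real content of the lemma, is missing and would need a construction of the product-of-monomials type rather than the bridge binomial you propose.
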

\begin{proof}
  Let $\H$ be a $\mu$-cycle. 
  By Lemma \ref{cycle1closed}, we may assume $\H$ has at least 
  two closed vertices. 
  By Lemma \ref{4cycle}, we may assume $\mu \geq 5$. 
  Moreover, by Lemma  \ref{cycle2ad-closed}, 
  we may assume that there are no two adjacent closed vertices in $\H$. 

  \par
  Suppose that $\H$ contains a string of opens $\s$ 
  with $n_0 \geq 3$ open vertices. 
By Lemma \ref{cycle3ad-open-0mod3}, we may assume that $n_0 \equiv 1,2 \mod 3$. 

  \par
  We first assume that $n_0 \equiv 1 \mod 3$. 
  Let $v$ be an endpoint of $\s$, and let $u_1, u_2, u_3, u_4$ be adjacent open vertices 
  following $v$. Let $\H'$ be the cycle hypergraph obtained by turning 
  $u_2$ into a closed vertex. 
  We claim that $\pd (\H) = \pd (\H')$. 

  \par
Indeed, by the change we made, the string of opens $\s$ in $\H$ is now divided into two 
  strings of opens $\s_1$ and $\s_2$ (in $\H'$), with $1$ and $n_0 -2$ open vertices, respectively.
  It is easy to see that $\mu (\H') = \mu (\H)$, $s (\H') = s (\H) + 1$. 
  Also, since $\lfloor (n_0 - 1)/3 \rfloor = (n_0 - 1)/3$, 
  $\lfloor (1 - 1)/3 \rfloor + \lfloor ((n_0 - 2) - 1)/3 \rfloor 
    = (n_0 - 1)/3 - 1$, 
  we have $b (\H') = b (\H)$. 
  Moreover, the modularity is also unchanged because the change does not 
  affect to the number of $2$-special configurations. Now, the equality $\pd (\H) = \pd (\H')$ follows from the formula of Theorem \ref{LMpd}.

  \par
  Next, assume that $n_0 \equiv 2 \mod 3$. 
  Let $v$ be an endpoint of $\s$ and let $u_1, u_2, u_3, u_4, u_5$ 
   be adjacent open vertices following $v$. 
  Let $\H'$ be the cycle hypergraph obtained by turning 
  $u_3$ into a closed vertex. 
  We claim that $\pd (\H) = \pd (\H')$. 

  \par
  By the change, the string of opens $\s$ in $\H$ is now divided into two 
    strings of opens $\s_1$ and $\s_2$ (in $\H'$), with $2$ and $n_0 -3$ open vertices, respectively. 
  It is easy to see that $\mu (\H') = \mu (\H)$, $s (\H') = s (\H) + 1$. 
  Since $\lfloor (n_0 - 1)/3 \rfloor = (n_0 - 2)/3$, 
  $\lfloor (2 - 1)/3 \rfloor + \lfloor ((n_0 - 3) - 1)/3 \rfloor 
    = (n_0 - 2)/3 - 1$, 
  we have $b (\H') = b (\H)$. 
  Furthermore, the modularity is also unchanged because the change does not 
  affect to the number of $2$-special configurations.  All the above together with the formula of Theorem \ref{LMpd} implies the equality $\pd (\H) = \pd (\H')$.
  
  \par
Moreover, in either case, if $\triara (\H') = \pd (\H')$ then also $\triara (\H) = \pd (\H)$ holds, as it can be seen by substituting $1$ for the variables corresponding to the vertices which we made become closed.

Then, this procedure produces a new hypergraph $\widetilde{\H}$ 
(obtained by making selected open vertices of $\H$ become closed) 
and all strings of opens in $\widetilde{\H}$ have at most $2$ two open 
vertices. 
Moreover, the above shows that if 
$\triara (\widetilde{\H})=\pd (\widetilde{\H})$, 
then $\triara (\H) = \pd (\H)$ also holds. 
The statement now follows.
\end{proof}

By the above results, we may then assume that $\H$ is a cycle 
not containing two consecutive closed vertices and 
whose strings of opens  have at most $2$ open vertices.
Note that for such a graph, $b(\H) = s(\H)$ holds. 

Next, we prove the case where there are strings of opens with precisely $2$ open vertices.
\begin{Lemma}
  \label{cycle-cooc}
  Assume that $\H$ contains a closed--open--open--closed string $\s$, 
  where the two closed vertices of $\s$ are distinct. 
  Let $\H'$ be the cycle hypergraph obtained by removing the
  $2$ open vertices of $\s$ from $\H$
  and identifying the two closed vertices of $\s$.

  \par
  If $\triara (\H') = \pd (\H')$, then $\triara (\H) = \pd (\H)$.
\end{Lemma}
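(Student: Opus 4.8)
The plan is to establish the two estimates $\triara(\H)\le\triara(\H')+2$ and $\pd(\H)=\pd(\H')+2$, and then to combine them with the hypothesis $\triara(\H')=\pd(\H')$ and the universal bound $\pd\le\triara$: once both are in hand, the chain $\pd(\H)\le\triara(\H)\le\triara(\H')+2=\pd(\H')+2=\pd(\H)$ forces every inequality to be an equality, yielding $\triara(\H)=\pd(\H)$. Throughout, write $\s$ as the consecutive vertices $v,u_1,u_2,v'$, with $v,v'$ the two (distinct) closed vertices and $u_1,u_2$ the open ones, and let $m_v,m_{u_1},m_{u_2},m_{v'}$ be the corresponding generators of $I(\H)$; in $\H'$ the two closed vertices have been identified into one closed vertex $w$, whose generator I denote $m_w$.

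For the combinatorial identity I would use Theorem \ref{LMpd}. First, $\mu(\H')=\mu(\H)-3$, since the two open vertices are deleted and the two closed ones are merged. Next, because we have already reduced to the case in which no two closed vertices are adjacent and every string of opens has at most two open vertices, we have $b=s$ for both hypergraphs; removing the length-two string $\s$ and merging its endpoints destroys exactly one string of opens, so $s(\H')=s(\H)-1$ and hence $b(\H')=b(\H)-1$. The one point needing care is the modularity: I claim $M(\H')=M(\H)$, via the natural correspondence that inserts (resp.\ deletes) $\s$ as an interior string at $w$. A maximal family of pairwise disjoint $2$-special configurations of $\H'$ lifts to one of $\H$ of the same size: a configuration not meeting $w$ lifts verbatim, while the (at most one) configuration using $w$ as an interior closed vertex lifts by reinserting $\s$ as an interior string, which is admissible precisely because $\s$ has two open vertices (so its length is $\equiv 2\bmod 3$); the reverse direction deletes $\s$ identically. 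Granting this, Theorem \ref{LMpd} gives $\pd(\H)=\pd(\H')+2$ ($\mu$ drops by $3$, $b$ by $1$, $M$ is unchanged).

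For the bound $\triara(\H)\le\triara(\H')+2$ I would transport a radical generating system of $I(\H')$. Let $a_1,\dots,a_r$, $r=\triara(\H')$, generate $I(\H')$ up to radical, taken (as in the constructions of the preceding lemmas) so that $m_w$ occurs as a summand. Replace every occurrence of the summand $m_w$ by $m_{v'}$, obtaining $a_1',\dots,a_r'\in R(\H)$, and adjoin the two elements $m_{u_1}$ and $m_v+m_{u_2}$. I claim these $r+2$ elements generate $I(\H)$ up to radical. All generators of $I(\H)$ other than $m_v,m_{u_1},m_{u_2},m_{v'}$ are shared with $I(\H')$ and, together with $m_{v'}$, lie in the radical of $(\,a_1',\dots,a_r')$; the monomial $m_{u_1}$ is adjoined directly; and since $m_vm_{u_2}\in(m_{u_1})$, Lemma \ref{SVLemma} applied to the binomial $m_v+m_{u_2}$ (with $m_{u_1}$ as the earlier divisor) places both $m_v$ and $m_{u_2}$ in the radical. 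As each $a_i'$ is a monomial, binomial or trinomial, and the two adjoined elements are a monomial and a binomial, this simultaneously bounds $\triara$ (and $\biara$, when no trinomials occur).

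The crux — and the step I expect to be the main obstacle — is justifying that $a_1',\dots,a_r'$ still generate the subideal $(\,\text{shared generators},\,m_{v'})$ up to radical after the substitution $m_w\mapsto m_{v'}$. This replacement is \emph{not} a ring homomorphism, so one must check that replacing the summand $m_w=z\,x_ex_k$ (with $x_e,x_k$ the two edge-variables at $w$ and $z$ its private variable) by $m_{v'}=z'\,x_hx_k$ leaves every radical-membership step intact. This holds provided the original system invokes $m_w$ only through the edge $x_k$ it shares with $m_{v'}$, and never through $x_e$. Since $m_w$ is symmetric in its two edges and never serves as a divisor (its private variable $z$ appears in no other generator), one can orient $\s$ so that this is the case; the mirror orientation is handled by the symmetric choice that replaces $m_w$ by $m_v$ and adjoins $m_{u_2}$ and $m_{v'}+m_{u_1}$. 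Checking that each Schmitt--Vogel step in the inductive construction of the $a_i$ indeed only uses the $x_k$-edge of $m_w$ is the technical heart of the argument.
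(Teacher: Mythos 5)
Your computation of $\pd(\H)=\pd(\H')+2$ via Theorem \ref{LMpd} agrees with the paper's ($\mu$ drops by $3$, $b=s$ drops by $1$, $M$ is unchanged because the removed string has two open vertices), and reducing the lemma to the bound $\triara(\H)\le\triara(\H')+2$ is the right strategy. The gap is in how you try to prove that bound. You propose to take a radical generating system $a_1,\dots,a_r$ of $I(\H')$ in which $m_w$ ``occurs as a summand,'' replace that summand by $m_{v'}$, and check that every Schmitt--Vogel step survives. But the hypothesis $\triara(\H')=\pd(\H')$ only asserts the \emph{existence} of some $r$ trinomials generating $I(\H')$ up to radical; it gives no control over their shape, so assuming that $m_w$ appears as a summand, and moreover is only ever invoked ``through the edge $x_k$,'' would force you to strengthen the inductive hypothesis throughout Section \ref{sec:cycle} and re-verify it in every case of Lemmas \ref{cycle1closed}--\ref{cycle-co}. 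Even granting the shape, you explicitly leave unverified the claim that the substitution $m_w\mapsto m_{v'}$ --- which, as you note, is not a ring homomorphism --- preserves each radical-membership step. As written, the argument is incomplete exactly at what you call its ``technical heart.''

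The paper sidesteps all of this by exploiting the freedom, coming from Proposition \ref{class-hypergraph} and Remark \ref{bi-tri-ara}, to choose \emph{which} ideal realizes $\H'$: it sets $I'=I_0+(g_0)$ where $g_0=m_v\,m_{v'}$ is the \emph{product} of the two closed-vertex monomials of $\s$, checks that $\H(I')=\H'$, and adjoins the two crossing binomials $g_1=m_v+m_{u_2}$ and $g_2=m_{v'}+m_{u_1}$ (each closed monomial paired with the non-adjacent open one; note your adjoined binomial is $g_1$, but you use the monomial $m_{u_1}$ in place of $g_2$). The identity $m_{u_2}m_{u_1}=(g_1-m_v)(g_2-m_{v'})\in(g_0,g_1,g_2)$ then puts the product of the two open-vertex monomials, and hence by Lemma \ref{SVLemma} all four monomials of $\s$, into $\sqrt{(g_0,g_1,g_2)}$. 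Since $g_0\in I'$, \emph{any} set of $\triara(\H')$ trinomials generating $I'$ up to radical can be used as a black box together with $g_1,g_2$; no substitution and no structural assumption on the $a_i$ is needed. This choice of the representative ideal $I'$ with merged generator $m_vm_{v'}$ is the idea your proposal is missing.
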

\begin{proof}
  Let $\H$ be a cycle hypergraph on $[\mu]$. By Lemma  \ref{cycle2ad-closed}, 
    we may assume that there are no two adjacent closed vertices in $\H$, 
  and by Lemma \ref{cycle3ad-open} all strings of opens have at most 
  two open vertices. 
  We first claim that $\pd (\H) = \pd (\H') + 2$. 

  \par
  It is easy to see that $\mu (\H) = \mu (\H') + 3$ and 
  $s (\H) = s (\H') + 1$. 
  Since the removed string of opens has $2$ open vertices, 
  the modularity is unchanged. Hence, 
  the claim follows by the formula of Theorem \ref{LMpd}.
To prove the statement we show that $\triara(\H) \leq \triara(\H')+2$.

  \par
  Let $1, \mu, \mu - 1, \mu - 2$ be the vertices of the string $\s$. 
  We set the monomials corresponding to these vertices 
  to be 
  \begin{equation}
    \label{4monomials}
    y_1 x_1 x_{\mu}, x_{\mu - 1} x_{\mu}, 
    x_{\mu - 2} x_{\mu - 1}, y_{\mu - 2} x_{\mu - 3} x_{\mu - 2}. 
  \end{equation}
  We set
  \begin{displaymath}
    \left\{
    \begin{aligned}
      g_0 &= y_1 y_{\mu - 2} x_1 x_{\mu - 3} x_{\mu - 2} x_{\mu}, \\
      g_1 &= y_1 x_1 x_\mu + x_{\mu - 2} x_{\mu - 1}, \\
      g_2 &= y_{\mu - 2} x_{\mu - 3} x_{\mu - 2} + x_{\mu - 1} x_{\mu}. 
    \end{aligned}
    \right.  
  \end{displaymath}
We claim that
  \begin{displaymath}
    y_1 x_1 x_{\mu},\; x_{\mu - 1} x_{\mu}, \;
    x_{\mu - 2} x_{\mu - 1},\; y_{\mu - 2} x_{\mu - 3} x_{\mu - 2}\; 
    \in \sqrt{(g_0, g_1, g_2)}. 
  \end{displaymath}
Indeed, since   $$x_{\mu - 2} x_{\mu - 1} \cdot x_{\mu - 1} x_{\mu} 
     = (g_1 - y_1 x_1 x_{\mu})(g_2 - y_{\mu - 2} x_{\mu - 3} x_{\mu - 2}) 
    \in (g_0, g_1, g_2),$$
  we have $x_{\mu - 2} x_{\mu - 1} x_{\mu} \in \sqrt{(g_0, g_1, g_2)}$. 
  Then the claim follows by Lemma \ref{SVLemma}. 

  \par
  Let $I_0$ be the squarefree monomial ideal which is generated by 
  all monomials in $G(I(\H))$ except for the $4$ monomials in (\ref{4monomials}). 
  Then $I(\H) = I_0 + (y_1 x_1 x_{\mu}, x_{\mu - 1} x_{\mu}, 
    x_{\mu - 2} x_{\mu - 1}, y_{\mu - 2} x_{\mu - 3} x_{\mu - 2})$. 
  Let $I'$ be the squarefree monomial ideal defined as 
  $I'=I_0+(y_1y_{\mu - 2} x_1 x_{\mu - 3} x_{\mu - 2}x_{\mu})$ 
  and note that $\H (I') = \H'$. 
  Since $g_0=y_1 y_{\mu - 2} x_1 x_{\mu - 3} x_{\mu - 2} x_{\mu} \in I'$ 
  it follows that $\ara (\H')$ elements which generate $I'$ up to radical, 
  together with $g_1, g_2$ generate $I (\H)$ up to radical. 
\end{proof}

Therefore, we reduce to the case of cycle hypergraphs 
in which closed vertices and open vertices appear alternately. 
\begin{Lemma}
  \label{cycle-co}
  Let $\H$ be a cycle hypergraph in which closed vertices and open vertices 
  appear alternately. 
  Then we have $\ara (\H) = \triara (\H) = \pd (\H)$. 
\end{Lemma}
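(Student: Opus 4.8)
The plan is to reduce everything to an explicit choice of generators up to radical, using that $\pd(\H)\le\ara(\H)\le\triara(\H)$ always holds, so it suffices to exhibit $\pd(\H)$ trinomials, binomials or monomials generating $I(\H)$ up to radical. First I would fix notation: since closed and open vertices alternate along the cycle, $\mu=2k$ is even; writing the closed vertices $c_1,\dots,c_k$ and open vertices $o_1,\dots,o_k$ in cyclic order $c_1,o_1,c_2,o_2,\dots,c_k,o_k$, the monomials can be taken to be $C_i=z_ib_{i-1}a_i$ and $O_i=a_ib_i$ (indices mod $k$, with $b_0=b_k$). Every string of opens then has a single open vertex, so $s(\H)=b(\H)=k$ and a $2$-special configuration is exactly a pair of open vertices adjacent across one closed vertex; hence $M(\H)=\lfloor k/2\rfloor$ and Theorem \ref{LMpd} gives $\pd(\H)=k+\lfloor k/2\rfloor$. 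The one divisibility I would exploit throughout is $O_i\mid C_iC_{i+1}$ (the open monomial divides the product of the two closed monomials flanking it), while, because of the extra variables $z_i$, no generator divides a product of the form $O_iO_j$ or $O_iC_j$; this dichotomy drives both cases.

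For $k$ even I would keep the $k$ open monomials $O_1,\dots,O_k$ as monomial generators and pair the closed monomials into the $k/2$ binomials $C_1+C_2,\ C_3+C_4,\ \dots,\ C_{k-1}+C_k$. Applying Schmitt--Vogel (Lemma \ref{SVLemma}) with $P_0,\dots,P_{k-1}$ the singletons $\{O_1\},\dots,\{O_k\}$ and $P_{k-1+j}=\{C_{2j-1},C_{2j}\}$, condition (SV3) for each pair is furnished by $O_{2j-1}\mid C_{2j-1}C_{2j}$. This produces $k+k/2=\pd(\H)$ elements, all binomials or monomials, so in fact $\biara(\H)=\pd(\H)$ in this case.

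The main obstacle is the case $k$ odd. Here the closed monomials cannot be perfectly matched, and the dichotomy above shows that a single application of Schmitt--Vogel cannot do better than $k+\lceil k/2\rceil=\pd(\H)+1$ elements: the only admissible multi-element groups are pairs of consecutive closed monomials, the open monomials can neither be merged with one another nor dropped, and covering the odd cycle of closed monomials by such pairs costs $\lceil k/2\rceil$. Thus I must save one element by a direct radical computation rather than a one-shot Schmitt--Vogel argument. The model is $k=3$, where $O_1,O_2,O_3$ together with the single trinomial $C_1+C_2+C_3$ suffice, since the three needed cross-products $C_1C_2,\ C_2C_3,\ C_3C_1$ are divisible by $O_1,O_2,O_3$ respectively; this works precisely because for $k=3$ every two closed monomials are consecutive.

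For general odd $k$ I would handle the bulk of the cycle by the even-case pairing and treat a short \emph{seam} by one trinomial, then prove the remaining radical memberships by hand in the style of the proof of Lemma \ref{cycle1closed} (Case 3): multiply the seam relations by a suitably chosen variable so that the obstructing products (those involving two open monomials, or a closed and a non-adjacent closed) fall into the ideal, deduce the individual monomials lie in the radical by an induction along the cycle, and finally clear the extra variable. The hard part will be exactly this verification --- showing that the single seam trinomial, together with the open monomials and the paired binomials, recovers every generator up to radical despite the fact that the relevant products are captured by no single generator. Once this is established one gets $\triara(\H)\le\pd(\H)$, and combined with $\pd(\H)\le\ara(\H)\le\triara(\H)$ the equalities $\ara(\H)=\triara(\H)=\pd(\H)$ follow.
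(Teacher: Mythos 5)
Your setup and your even case are fine: the formula $\pd(\H)=k+\lfloor k/2\rfloor$ for $\mu=2k$ is correct, and keeping the $k$ open monomials while pairing consecutive closed monomials into $k/2$ binomials is exactly the paper's Case 1 ($\mu=4m$). The $k=3$ base case also matches the paper. The problem is the odd case $k\ge 5$, which you correctly identify as the crux but then resolve with a construction that cannot work. Your proposed elements are $O_1,\dots,O_k$, the binomials $C_1+C_2,\dots,C_{k-4}+C_{k-3}$, and the seam trinomial $C_{k-2}+C_{k-1}+C_k$. The ideal they generate does \emph{not} have radical $I(\H)$, so the ``hard verification'' you defer is not merely hard, it is false for this choice. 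Concretely, for $k=5$ write $C_i=z_ia_ib_{i-1}$, $O_i=a_ib_i$ (indices mod $5$) and consider the point with all $z_i=1$, $(a_1,\dots,a_5)=(0,0,1,0,-1)$, $(b_1,\dots,b_5)=(1,1,0,1,0)$. Every $O_i$ vanishes, $C_1=C_2=C_4=0$, $C_3=1$, $C_5=-1$, so all seven of your elements vanish while $C_3\ne 0$; hence $C_3\notin\sqrt{J}$. The root cause is the one you yourself flagged: the product $C_{k-2}C_k$ of the two \emph{non-adjacent} closed monomials inside your trinomial is divisible by no generator and lies in the radical of none of your other elements, and no amount of chasing memberships around the cycle can manufacture it.

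The paper's proof supplies exactly the missing element, and it does so by induction rather than by a one-shot choice. For $\mu=4m+2$ with $m\ge 2$ it picks a closed--open--closed--open--closed configuration with closed monomials $C'$, $C''$, $C'''$ and inner open monomials $O'$, $O''$, contracts the two extreme closed vertices to a single closed vertex of a smaller alternating cycle $\H'$ whose corresponding monomial is the \emph{product} $C'C'''$, and applies the induction hypothesis to get $\pd(\H')$ elements generating $I'$ up to radical. Since $C'C'''$ is a generator of $I'$, it lies in the radical of those elements, and this is precisely what legitimizes the trinomial $C'+C''+C'''$ in a Schmitt--Vogel-type argument: the three pairwise products are caught by $O'$, $O''$, and the inductively controlled $C'C'''$. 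Adding $O'$, $O''$ and the trinomial gives $\pd(\H')+3=\pd(\H)$ elements. If you want to repair your argument, you must replace your flat ``pairs plus seam'' list by this contraction-and-recurse scheme (or otherwise ensure that some chosen element has $C_{k-2}C_k$ in its radical); as written, the odd case is a genuine gap.
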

\begin{proof}

  We first note that the number $\mu$ of vertices of $\H$ is even. 

  \par \noindent
  \textit{Case 1}: 
  $\mu = 4m$. 

  \par
  By Theorem \ref{LMpd} we have $\pd (\H) = 3m$, because $s (\H) = 2m$ and $M (\H) = m$. 
  We now divide the vertices in disjoint groups of $4$ adjacent vertices. 
  In other words, there exist $m$ strings of the shape 
  {\it closed--open--closed--open} in $\H$. 
  It suffices to show that the ideal associated to any such string 
  is generated up to radical by $3$ polynomials 
  each of which has at most $3$ terms. 
  So, let $m_1, m_2, m_3, m_4$ be monomials corresponding to the 
  $4$ vertices of the string, we can write $m_1, m_2, m_3, m_4$ as 
  $y_1 x_1 x_{\mu}, x_1 x_2, y_3 x_2 x_3, x_3 x_4$. 
  By Lemma \ref{SVLemma},  the following $3$ polynomials 
  \begin{displaymath}
    x_1 x_2, \  y_1 x_1 x_{\mu} + y_3 x_2 x_3, \  x_3 x_4 
  \end{displaymath}
  generate $(m_1, m_2, m_3, m_4)$ up to radical, whence the statement follows.  
  \par
  \bigskip

  \par \noindent
  \textit{Case 2}: $\mu = 4m + 2$ ($m \geq 1$). 

  \par
  In this case, we prove the statement by induction on $m$. 
  First assume $m=1$. 
  Then $I(\H)$ is generated by the following $6$ monomials: 
  \begin{displaymath}
    y_1 x_1 x_6, \  x_1 x_2, \  y_3 x_2 x_3, \  
    x_3 x_4, \  y_5 x_4 x_5, \  x_5 x_6. 
  \end{displaymath}
  By Theorem \ref{LMpd}, we have $\pd (\H) = 4$, 
  and by Lemma \ref{SVLemma} the following $4$ polynomials 
  generate $I(\H)$ up to radical: 
  \begin{displaymath}
    \left\{
    \begin{aligned}
      &x_1 x_2, \\
      &x_3 x_4, \\
      &x_5 x_6, \\
      &y_1 x_1 x_6 + y_3 x_2 x_3 + y_5 x_4 x_5. 
    \end{aligned}
    \right. 
  \end{displaymath}

  \par
  Now we assume that $m \geq 2$. 
  In this case, $\H$ contains a $2$-special configuration $\s$: 
  {\it closed--open--closed--open--closed}. 
  Let $1, \mu, \mu - 1, \mu - 2, \mu - 3$ be the vertices of the string $\s$. 
  We set the monomials corresponding to these vertices to be
  \begin{equation}
    \label{5monomials}
    y_1 x_1 x_{\mu},\  x_{\mu - 1} x_{\mu},\  
     y_{\mu - 1} x_{\mu - 2} x_{\mu - 1},\  x_{\mu - 3} x_{\mu - 2},\  
     y_{\mu - 3} x_{\mu - 4} x_{\mu - 3}.
  \end{equation}
  Let $\H'$ be the cycle hypergraph obtained by removing the $3$ inner vertices 
  $\mu, \mu - 1, \mu - 2$ of $\s$ from $\H$ 
  and identifying the two endpoints $1$ and $\mu - 3$ of $\s$. 
  Then $\H'$ is the cycle hypergraph with $\mu (\H') = 4(m-1)+2$
  in which closed vertices and open vertices appear alternately. 
  Note that Theorem \ref{LMpd} yields $\pd (\H) = \pd (\H') + 3$, because 
  $\mu (\H) = \mu (\H') +4$, $s (\H) = s (\H') + 2$, 
  and $M (\H) = M(\H') + 1$. 

  \par
  Let $I_0$ be the squarefree monomial ideal which is generated by 
  all monomials in $G(I(\H))$ except for the $5$ monomials 
  in (\ref{5monomials}). Then 
  \begin{displaymath}
    I(\H) = I_0 + (y_1 x_1 x_{\mu},\ x_{\mu - 1} x_{\mu},\ 
    y_{\mu - 1} x_{\mu - 2} x_{\mu - 1},\  x_{\mu - 3} x_{\mu - 2},\ 
    y_{\mu - 3} x_{\mu - 4} x_{\mu - 3}). 
  \end{displaymath}
  We set $I' = I_0 + (y_1 y_{\mu - 3} x_1 x_{\mu - 4} x_{\mu - 3} x_{\mu})$. 
  Note that $\H (I') = \H'$ and
  $y_1 y_{\mu - 3} x_1 x_{\mu - 4} x_{\mu - 3} x_{\mu}$ 
  is the monomial corresponding to the vertex $1$ of $\H (I')$. 
  Since $y_1 y_{\mu - 3} x_1 x_{\mu - 4} x_{\mu - 3} x_{\mu} \in I'$, 
  the following $3$ polynomials, together with $\ara (\H')$ elements 
  which generate $I'$ up to radical, generate $I (\H)$ up to radical: 
  \begin{displaymath}
    \left\{ 
    \begin{aligned}
      &x_{\mu - 1} x_{\mu}, \\
      &x_{\mu - 3} x_{\mu - 2}, \\
      &y_1 x_1 x_{\mu} + y_{\mu - 1} x_{\mu - 2} x_{\mu - 1} 
       + y_{\mu - 3} x_{\mu - 4} x_{\mu - 3}. 
    \end{aligned}
     \right. 
  \end{displaymath}
\end{proof}

\begin{acknowledgement}
  The authors thank the referee for reading our manuscript carefully. 
\end{acknowledgement}


\end{document}